\theoremstyle{plain}
\newtheorem{thm}{Theorem}
\newtheorem{conj}{Conjecture}
\newtheorem{lem}{Lemma}[section]
\newtheorem{prop}[lem]{Proposition}
\newtheorem{theo}{Theorem}
\theoremstyle{definition}
\newtheorem{defn}[lem]{Definition}
\newtheorem{rem}[lem]{Remark}
\newtheorem{ex}[lem]{Example}
\newtheorem{ques}{Question}
\let\ssection=\section
\renewcommand{\section}{\setcounter{equation}{0}\ssection}
\newcommand{\Ab}{\mathbb{A}}
\newcommand{\R}{\mathbb{R}}
\newcommand{\Z}{\mathbb{Z}}
\newcommand{\C}{\mathbb{C}}
\newcommand{\bc}{\mathbf{c}}
\newcommand{\bd}{\mathbf{d}}
\newcommand{\be}{\mathbf{e}}
\newcommand{\bx}{\mathbf{x}}
\newcommand{\ev}{\mathbf{ev}}
\newcommand{\bev}{\overline{\mathbf{ev}}}
\newcommand{\A}{\mathcal{A}}
\newcommand{\cC}{\mathcal{C}}
\newcommand{\F}{\mathcal{F}}
\newcommand{\cM}{\mathcal{M}}
\newcommand{\Qc}{\mathcal{Q}} 
\newcommand{\Rc}{\mathcal{R}}
\newcommand{\id}{\textup{Id}}
\newcommand{\SL}{\mathrm{SL}}
\newcommand{\half}{\frac{1}{2}}
\newcommand{\thalf}{\frac{3}{2}}
\def\a{\alpha}
\def\e{\varepsilon}
\def\s{\sigma}
\def\t{\tau}
\begin{document}

\title[]{Arithmetics of 2-friezes}

\author{Sophie Morier-Genoud}

\address{Sophie Morier-Genoud,
Institut de Math\'ematiques de Jussieu,
UMR 7586,
Universit\'e Pierre et Marie Curie,
4 place Jussieu, case 247,
75252 Paris Cedex 05
}

\email{sophiemg@math.jussieu.fr
}

\date{}



\begin{abstract}
We consider the variant of Coxeter-Conway frieze patterns called 2-frieze.
We prove that there exist infinitely many closed integral 2-friezes (i.e. containing only positive integers) provided the width of the array is bigger than 4. We introduce operations on the integral 2-friezes generating bigger or smaller closed integral 2-friezes.
\end{abstract}

\maketitle



\section{Introduction}
A frieze is a finite or infinite array whose entries (that can be integers, real numbers or more generally elements in a ring)
satisfying
a local rule.
The most classical friezes are the ones introduced by Coxeter \cite{Cox}, 
and later studied by Conway and Coxeter \cite{CoCo},
for which the rule is the following:
\textit{every four neighboring entries form a matrix of determinant 1}.

\begin{figure}[hbtp]
$$
 \begin{array}{ccccccccccccccc}
1&&1&& 1&&1&&1&&1&&1&& \\[4pt]
&4&&2&&1&&3&&2&&2&&1&
 \\[4pt]
3&&7&&1&&2&&5&&3&&1&\
 \\[4pt]
 &5&&3&&1&&3&&7&&1&&2\\[4pt]
 3&&2&&2&&1&&4&&2&&1\\[4pt]
&1&&1&&1&&1&&1&&1&&1
\end{array}
$$
\caption{Fragment of integral frieze of Coxeter-Conway of width 4}
\label{exCoCox}
\end{figure}

Figure \ref{exCoCox} gives an example of a Coxeter-Conway frieze  filled in with positive integers. 
In this example one can easily check that the local rule is satisfied:
$$
 \begin{array}{ccccccc}
 &B&\\
 A&&D\\
 &C&
\end{array}\quad
\Longrightarrow
\quad
AD-BC=1.
$$

A particularly interesting class of friezes is the class of \textit{integral closed friezes}.
\textit{Closed} means that the array is bounded above and below by rows of 1s, in this case we call
\textit{width} of the frieze the number of rows strictly between the top and bottom rows of 1s.
\textit{Integral} means that the frieze is filled in with positive integers.

Let us mention the following remarkable properties for Coxeter-Conway closed friezes,
 \cite{CoCo}.
 {\it
\begin{enumerate}
\item[(CC1)] Every row in a closed frieze of width $n-3$ is $n$-periodic,
\item[(CC2)] Integral closed friezes of width $n-3$ are in one-to-one correspondence with the triangulations of an $n$-gon.
The first non-trivial row
 in the frieze gives the number of incident triangles at each vertex (enumerating in a cyclic order), see Figure \ref{triang}.
\end{enumerate}}

\begin{figure}[hbtp]
\includegraphics[width=4cm]{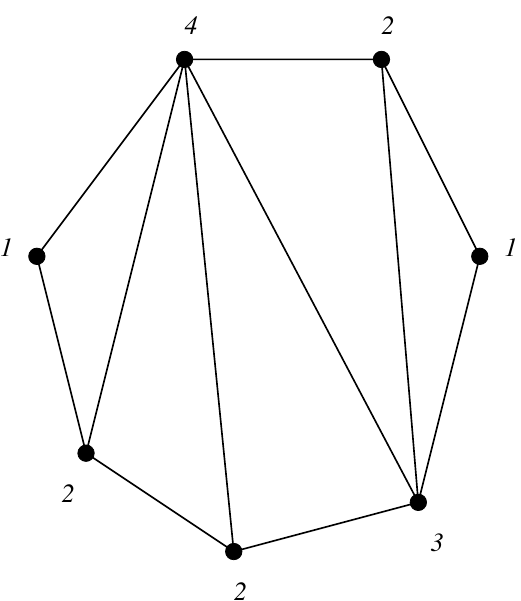}
\caption{Triangulation associated to the frieze of Figure \ref{exCoCox}}
The label attached at each vertex is the number of incident triangles.
\label{triang}
\end{figure}

Coxeter and Conway established many other surprising connections between frieze patterns and classical objects in mathematics, 
like Gauss {\it pentagramma mirificum}, Fibonacci numbers, Farey sequences\footnote{See also Richard Schwartz' applet at
http://www.math.brown.edu/$\sim$res/Java/Frieze/Main.html}, continued fractions,.... 

The study of frieze patterns is currently reviving due to connections with Fomin-Zelevinsky's cluster algebras.
This new strong interest started in 2005 with the work of Caldero and Chapoton~\cite{CaCh} 
where they connected Coxeter-Conway frieze patterns
to cluster algebras of type A. 
New versions of frieze patterns have been introduced to extend this connection to some other types \cite{BaMa}, \cite{ARS}, 
and provide new information on cluster variables, see also 
\cite{AD}, \cite{KeSc},~\cite{ADSS}.

In 2005, J. Propp suggested a variant of frieze \cite{Pro}. This variant is called \textit{$2$-frieze} in \cite{MOT}. 
The defining local rule for the variant of 2-frieze is the following:
\textit{each entry in the frieze is equal to the determinant of the matrix formed by its four neighbors}.

\begin{figure}[hbtp]
$$
\begin{array}{cccccccccccccccccc}
1&1&1&1&1&1&1&1&1&1&1&1&1&1&1&1\\
3&7&4&2&2&2&2&2&2&5&10&3&1&2&3&2\\
11&5&10&6&2&2&2&2&8&15&5&7&5&1&1&7\\
8&15&5&7&5&1&1&7&11&5&10&6&2&2&2&2\\
2&5&10&3&1&2&3&2&3&7&4&2&2&2&2&2\\
1&1&1&1&1&1&1&1&1&1&1&1&1&1&1&1
\end{array}
$$
\caption{Fragment of integral closed 2-frieze of width 4}
\label{ex2frieze}
\end{figure}

Figure \ref{ex2frieze} gives an example of integral closed 2-frieze
in which one can easily check that the local rule is satisfied:
$$
 \begin{array}{ccccccc}
 *&B&*\\
 A&E&D\\
 *&C&*
\end{array}\quad
\Longrightarrow
\quad
AD-BC=E.
$$

Propp anounced and conjectured some results on the 2-friezes and
also referred to unpublished work of D. Hickerson.
It seems that nobody had studied this type of frieze in details until \cite{MOT}. 
In \cite{MOT}, the 2-friezes were introduced to study the moduli space of polygons in the projective plane.
It turned out that the space of closed 2-friezes 
of width $n-4$ can be identified to the space of $n$-gons in the projective plane
(provided $n$ is not a multiple of 3).

In the present paper we are interested in combinatorics and algebraic aspects of the 2-friezes.
Our study concerns the particular class of \textit{integral closed 2-friezes}.

The natural question, posed in \cite{Pro} and \cite{MOT}, is;

\begin{ques}\label{classif}
How many integral closed 2-friezes do exist for a given width?
\end{ques}

Let us stress that the answer is known in the case of Coxeter-Conway friezes:
they are counted by Catalan numbers! (This is a consequence of the property (CC2) above.)

In the case of 2-friezes of width $m$ we have the following information
\begin{enumerate}
\item[\textbullet] for $m=1$ and $m=2$, there exist respectively 5 and 51 integral closed 2-friezes; 
this was announced in \cite{Pro} and proved in \cite{MOT},
\item[\textbullet] for $m=3$, there exist at least 868 integral closed 2-friezes; these friezes were found using two independent computer programs (by J. Propp \cite{Pro} and by R. Schwartz used in \cite{MOT}),
\item[\textbullet] for $m>4$, in a private communication V. Fock conjectured to us that there are infinitely many integral closed 2-friezes.
\end{enumerate}

The case $m=4$ still needs to be investigated. The main result of the paper is the following.

\begin{thm}\label{inffrieze}
For any $m>4$ there exist infinitely many integral closed 2-friezes of width $m$.
\end {thm}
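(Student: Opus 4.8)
The plan is to exhibit, for each $m>4$, an explicit infinite family of closed integral $2$-friezes of width $m$, and to do so by constructing large friezes out of small ones via a gluing/stretching operation. The basic idea: the paper's abstract promises "operations on the integral 2-friezes generating bigger or smaller closed integral 2-friezes," so I would define an operation that takes an integral closed $2$-frieze of width $m$ and produces another one of the same width, strictly larger in the sense that the total of its entries (or the maximal entry) strictly increases; iterating gives infinitely many pairwise distinct examples. Concretely, for width $m$ with $m+1=n$ (recall the correspondence in \cite{MOT} with $n$-gons in $\pP^2$ when $3\nmid n$) the natural model for an entry of a $2$-frieze is a product of consecutive "coordinate" invariants, and gluing two polygons along an edge — or inserting a triangulated piece — should correspond to a frieze operation. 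So the first step is to set up the combinatorial bookkeeping: identify each closed $2$-frieze of width $m$ with a cyclic sequence of data (the first two nontrivial rows, say), and write the local rule as a recursion that propagates this data.

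The second step is to produce one explicit seed for each $m>4$. A clean way: start from a "boundary" configuration where the frieze has a long stretch that looks like the periodic frieze coming from a polygon with many collinear-type vertices, i.e. a stretch of rows that are eventually constant, glued to a small fixed cap at each end. For this I would look for a one-parameter family: fix a base $2$-frieze of width $m$ and find a row-operation (add a "fan" of triangles, or replace a vertex by two vertices joined by an edge in the polygon picture) that can be applied at the same vertex repeatedly. Because $m>4$ gives enough room (width $1,2$ are rigid, width $3$ and $4$ are the borderline cases excluded or unresolved), such a local insertion will not be forced to collapse: the extra "slack" in having $\ge 5$ interior rows lets the newly created entries be positive integers rather than being pinned down. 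I would verify positivity and integrality by an induction along the frieze, using the local rule $AD-BC=E$ to express each new entry as a polynomial with nonnegative integer coefficients in the seed data (a "Laurent/positivity" type argument, familiar from the cluster-algebra connection mentioned in the introduction).

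The third step is to check that the infinitely many friezes so produced are genuinely distinct and all closed of width exactly $m$: closedness and width are preserved by construction (the operation acts on the interior and keeps the bordering rows of $1$s), and distinctness follows because the chosen statistic (say, the maximum entry, or the number of $1$s in the first nontrivial row) strictly changes at each step. Finally I would remark that the same construction does not settle $m=4$ — the insertion there forces a determinant condition that has only finitely many positive integer solutions — which is consistent with the status of the $m=4$ case described above.

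The step I expect to be the main obstacle is the second one: guaranteeing that the local insertion can be performed while keeping \emph{all} propagated entries positive integers, and not merely the few entries adjacent to the insertion site. The local rule forces a global consistency condition (the frieze must close up with the correct period), so the parameter one introduces is constrained; the real content is showing that for $m>4$ there is a full infinite sub-family of parameter values satisfying the global closure and positivity simultaneously. I would handle this either by exhibiting the closing-up map explicitly as a product of $\SL$-type matrices and reading off when the product has positive integer entries, or — more in the spirit of \cite{MOT} — by working on the polygon side, where "insert an edge/vertex" is manifestly an operation on projective $n$-gons and positivity of the resulting frieze corresponds to the new polygon lying in a suitable positive region; transporting this back through the frieze--polygon dictionary then gives the claim.
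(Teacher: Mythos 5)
Your proposal does not contain a proof; it is a plan whose central object is never constructed, and the missing construction is precisely where the entire difficulty of the theorem lies. You propose an operation that takes an integral closed $2$-frieze of width $m$ and returns another one \emph{of the same width} with a strictly larger statistic, so that iteration yields infinitely many. But no such operation is exhibited. The gluing/cutting operations that the paper does provide (Theorems \ref{GluThm}, \ref{CutThm}, \ref{Gluxy}) all change the width: gluing friezes of widths $m$ and $\ell$ produces width $m+\ell+1$ or $m+\ell-1$, so they cannot be iterated at fixed width. Your fallback --- a one-parameter local insertion ("add a fan of triangles", "replace a vertex by two") --- is exactly the step you yourself flag as the main obstacle: the frieze must close up $2n$-periodically with all entries positive integers, and this is a global constraint on the inserted data. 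You offer no argument that an infinite family of parameter values survives it. Moreover, your proposal contains no mechanism that distinguishes $m>4$ from $m\le 4$: for $m=1,2$ there are only $5$ and $51$ friezes, so any width-preserving "growing" operation must fail there, and the appeal to "enough slack when $m\ge 5$" is not a reason. (You also misstate the frieze--polygon dictionary: the width-$m$ friezes correspond to $n$-gons with $n=m+4$, not $n=m+1$.)

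The paper's actual route is entirely different and is where the $m>4$ threshold genuinely enters. One identifies two consecutive columns of the formal frieze $F(x_1,\ldots,x_{2m})$ with a cluster of the cluster algebra $\A(\Qc_m)$, $\Qc_m=A_2\times A_m$, which is of infinite type exactly when $m>4$, hence has infinitely many clusters. Evaluating all variables of any cluster at $1$ yields an integral closed $2$-frieze (integrality from the Laurent phenomenon, positivity because the exchange relations are subtraction-free). To conclude, one must show this evaluation map has finite fibers: the paper defines an equivalence $\bc\sim\bd$ (setting $\bc$ to all $1$'s forces $\bd$ to all $1$'s), shows it induces an injection from $\cC/\!\sim$ into the set of friezes, and then uses the positivity theorem (Nakajima, for the bipartite quiver $\Qc_m$) to prove that equivalent clusters are permutations of one another, so each class is finite. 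Infinitely many clusters with finite classes give infinitely many friezes. If you want to salvage your geometric approach, you would need to actually produce, for each $m>4$, an infinite family of $(m+4)$-gons in $\C^3$ satisfying the unimodularity condition \eqref{Normal} whose associated friezes are integral, positive, and pairwise distinct --- and explain why the construction breaks at $m=4$; as written, none of that is done.
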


Fock's intuition was based on the fact that the closed 2-friezes of width $m$ are related to cluster algebra
associated to the quiver $A_2\times A_m$, which is of infinite type for $m>4$.
Our proof is based on this idea using a procedure to construct integral closed 2-friezes from evaluation of cluster variables. 
However this procedure does not give all the possible friezes. 
Let us mention that our proof of Theorem \ref{inffrieze} uses the positivity conjecture in cluster algebras (see Section \ref{thcla}).

Our next series of results describe operations on the integral 2-friezes and procedures to get new friezes from old ones. 
These operations are given in Theorems \ref{GluThm}, \ref{CutThm}, \ref{Gluxy} in Section \ref{OpSect}.

The paper is organized as follows.
The main sections, Section \ref{ClustSect} and Section \ref{OpSect}, can be read independently.
In Section \ref{ClustSect}, 
we describe the connection between closed 2-friezes and cluster algebras.
The main definitions and results concerning the theory of cluster algebras
that we need are recalled. 
We explain how to get integral friezes from cluster algebras. We finally prove Theorem~\ref{inffrieze} in Section \ref{proofth1}.
In Section \ref{OpSect}, we recall the main properties of the 2-friezes and introduce
a series of algebraic operations on the friezes. 
In particular, we recall the link between closed friezes and moduli spaces of polygons.
This link is helpful to interpret the algebraic operations.
In Section \ref{Conc}, we conclude the paper by refining Question \ref{classif} and mentioning further direction.

\section{Closed 2-friezes and cluster algebra}\label{ClustSect}

Theorem \ref{inffrieze} will be proved with the help of the theory of cluster algebras.
These algebras have been defined by Fomin and Zelevinsky in the early 2000's.
The subject is knowing an exponential growth due to connection to many different fields of mathematics.
In Section \ref{cadef} and \ref{thcla}, we recall the main definitions and results about cluster algebras that will be useful for us. 
All the material can be found in the original work \cite{FZ1}, \cite{FZ2}, \cite{FZ3}, or in \cite[Chapter 3]{GSV}.
We use below the presentation made in \cite{MOT}.

\subsection{Closed 2-frieze}\label{defriz}
A $2$-frieze can be defined as a map $v:(i,j)\mapsto v_{i,j}$ from 
$(\half+\Z)^2\cup\Z^2$ to an arbitrary unital (division) ring, such that 
the following relation holds for all $(i,j) \in (\half+\Z)^2\cup\Z^2$
\begin{equation}\label{friezerule}
\textstyle
v_{i-1,j}\,v_{i,j+1}-\,v_{i,j}\,v_{i-1,j+1}\,=v_{i-\half,j+\half}.
\end{equation}
A $2$-frieze can be pictured as an infinite array as in Figure \ref{vij} below.
\begin{figure}[hbtp]
$$
 \xymatrix{
 &
&\ldots\ar@{-}[rd]
&\ldots \ar@{--}[ld]\ar@{--}[rd]
&\ldots\ar@<2pt>@{-}[ld]
&\\
&\ldots \ar@<2pt>@{-}[rd]
&v_{i-\thalf,j+\half}\ar@{--}[rd]\ar@{--}[ld]
& v_{i-1,j+1}\ar@{-}[ld]\ar@{-}[rd]
&v_{i-\half,j+\thalf}\ar@{--}[rd]\ar@{--}[ld]
&\ar@{-}[ld]\ldots\\ 
&v_{i-\thalf,j-\half}  \ar@{--}[rd]
& v_{i-1,j}\ar@<2pt>@{-}[rd]\ar@<2pt>@{-}[ld]
&v_{i-\half,j+\half}\ar@{--}[ld]\ar@{--}[rd]
& v_{i,j+1}\ar@<2pt>@{-}[ld]\ar@{-}[rd] 
&\ar@{--}[ld] \cdots\\
&v_{i-1,j-1} \ar@{-}[rd]
&v_{i-\half,j-\half} \ar@{--}[rd] \ar@{--}[ld]
&v_{i,j}\ar@{-}[ld]\ar@{-}[rd]
&v_{i+\half,j+\half}\ar@{--}[ld]\ar@{--}[rd]
&v_{i+1,j+1}\ar@{-}[ld]&
 \\
&\ldots &v_{i,j-1}\ar@{-}[rd]
& v_{i+\half,j-\half}\ar@{--}[ld]\ar@{--}[rd]
&v_{i+1,j}  \ar@{-}[ld]&\ldots\\
&&\ldots&\ldots&\ldots&&&&
}
$$
\caption{Indexing the entries of a 2-frieze.}\label{vij}
\end{figure}

A \textit{closed $2$-frieze}, is a map $v:(i,j)\mapsto v_{i,j}$, where
$(i,j)$ is, as before, a pair of integers or of half-integers, restricted to the stripe
$$
-1\leq{}i-j\leq{}m,
$$
where $m$ is a fixed integer called the \textit{width} of the frieze,
and satisfying the local rule \eqref{friezerule} together with the boundary conditions
$v_{i-1,i}=v_{i+\frac{m}{2},i-\frac{m}{2}}=1$ for all $i\in\Z$ or $\Z+\half$.

A closed $2$-frieze is represented by an infinite stripe
\begin{equation}\label{closedfr}
 \begin{matrix}
\cdots& 1&1&1&1&1&\cdots&1&\cdots
 \\[4pt]
\cdots&v_{0,0}&v_{\half,\half}&v_{1,1}&v_{\frac{3}{2},\frac{3}{2}}&v_{2,2}&\cdots&v_{i,i}&\cdots
 \\
&\vdots &\vdots &\vdots &\vdots &\vdots &&\vdots&\\
\cdots&\vdots &\vdots &\vdots &\vdots &\vdots &&v_{i+\frac{m-1}{2},i-\frac{m-1}{2}}&\cdots&\\
 \cdots
& 1&1&1&1&1&\cdots&1&\cdots \\[4pt]
\end{matrix}
\end{equation}

The following statement was known by J.Propp and D. Hickerson and has been written in \cite{MOT}
(it is an analog of the property (CC1) of the Coxeter-Conway friezes mentioned in the introduction).

\begin{prop} \cite{MOT}\label{perio}
In a closed $2$-frieze of width $m$, every row is $2n$-periodic, where $n=m+4$,
i.e. $v_{i+n,j+n}=v_{i,j}$ for all  $(i,j)$.
\end{prop}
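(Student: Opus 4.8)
The plan is to show periodicity by reducing the global statement to a purely local, finite computation that propagates across the strip. First I would observe that the boundary conditions pin down two full rows of $1$s, namely $v_{i-1,i}=1$ and $v_{i+\frac m2,i-\frac m2}=1$, and that the frieze rule \eqref{friezerule} lets one recover any entry from the two adjacent entries ``below-left'' and ``below-right'' of it (or, solving the relation the other way, from the entry together with its two neighbours on one side). Concretely, since $v_{i-1,j}\,v_{i,j+1}-v_{i,j}\,v_{i-1,j+1}=v_{i-\frac12,j+\frac12}$, knowing a ``southwest-to-northeast'' diagonal pair of consecutive entries in two adjacent rows determines the entry sitting between and above them; iterating, a single zig-zag path of $2(m+1)$ consecutive entries running between the top and bottom rows of $1$s determines the entire frieze to its right (and, symmetrically, to its left). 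Thus a closed $2$-frieze is rigidly determined by such a finite ``fundamental domain'' of data.

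Next I would make the width-$m$ strip into an honest finite recursion. Reading \eqref{closedfr}, the data of the frieze along a vertical slice is the column vector $(1,v_{i,i},\dots,v_{i+\frac{m-1}{2},i-\frac{m-1}{2}},1)$ of length $m+2$, and one can express the ``next'' half-integer-indexed slice in terms of the current integer-indexed slice (and vice versa) using the frieze rule together with the two rows of $1$s as boundary data. This is the standard device for Coxeter--Conway friezes: one attaches to each slice a unimodular-type matrix (a product of elementary ``continuant'' matrices built from the entries of that column), and the claim to verify is that passing from slice $i$ to slice $i+\frac12$ corresponds to multiplying by a fixed matrix, so that after $n=m+4$ integer steps one has multiplied by a specific element of $\mathrm{SL}$ that turns out to be (plus or minus) the identity. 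I would set up these transfer matrices carefully, check that the boundary $1$s force the product over one period to be $\pm\Id$, and conclude $v_{i+n,j+n}=v_{i,j}$. The key numerics to pin down is exactly why the period is $2n=2(m+4)$ and not some other multiple: the two rows of $1$s contribute $2$ to the ``effective width'', turning $m$ into $m+2$, and then the half-integer and integer sublattices together double this to $2(m+4)$; I would verify this by a direct small computation, e.g. checking it against the width-$4$ example in Figure~\ref{ex2frieze} where $n=8$ and one sees the $16$-periodicity of each row.

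The main obstacle I anticipate is bookkeeping rather than conceptual: getting the indexing of the two interleaved lattices $\Z^2$ and $(\frac12+\Z)^2$ consistent, so that ``one step'' of the transfer operator is unambiguous, and handling the sign in $\pm\Id$ (i.e. ruling out that the true period is $n$ rather than $2n$, or conversely that one needs $4n$). A clean way to sidestep sign trouble is to argue at the level of the entries directly: show that the assignment ``slice $\mapsto$ slice'' is a well-defined bijection on the finite-dimensional variety of admissible columns, that it has finite order dividing $2n$ by the matrix computation, and that $2n$ is the exact order by exhibiting one frieze (the integral example above) whose first nontrivial row is genuinely $2n$-periodic and no better. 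Since the statement is quoted from \cite{MOT} and the excerpt permits citing earlier results, I would in fact present the transfer-matrix argument as the natural self-contained proof while noting the reference; the honest work is the explicit identification of the period, which the width-$4$ example makes transparent.
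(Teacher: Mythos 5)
First, a point of comparison: the paper itself offers no proof of Proposition \ref{perio} --- it is quoted from \cite{MOT}, and the paper only remarks that periodicity follows either from an elementary argument given there or from the cluster-algebra link (periodicity of the bipartite belt for $A_2\times A_m$). So your proposal can only be measured against the route the paper points to, namely the order-three linear recurrence \eqref{Recur} and the identification of a width-$m$ closed $2$-frieze with an $n$-gon ($n=m+4$) in Propositions \ref{Isop} and \ref{Entry}. Your transfer-matrix plan is the $\SL_3$ analogue of the Coxeter--Conway continuant argument and is in exactly that spirit, so the overall strategy is sound.

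However, there is a genuine gap: the only substantive step --- that the product of the transfer matrices over one period equals $\pm\Id$, equivalently that the monodromy of the recurrence $V_i=v_{i,i}V_{i-1}-v_{i-\half,i-\half}V_{i-2}+V_{i-3}$ is trivial --- is asserted (``turns out to be'', ``check that the boundary $1$s force'') rather than derived from the boundary conditions. This is where all the content lies, and it is also where the period $n=m+4$ actually comes from; your heuristic for that number does not even balance arithmetically (doubling an ``effective width'' of $m+2$ gives $2m+4$, not $2(m+4)=2m+8$), and verifying it on the single width-$4$ example of Figure \ref{ex2frieze} establishes nothing for general $m$. The standard way to close the gap is to append two rows of $0$s above and below the strip \eqref{closedfr}, observe that each diagonal of the frieze then satisfies the same order-three recurrence, and note that the rows of $0$s and $1$s supply three independent $n$-periodic solutions, forcing the monodromy to be the identity; the formulas of Proposition \ref{Entry} then give $v_{i+n,j+n}=v_{i,j}$ at once (and also settle your sign worry, since a monodromy of $-\Id$ would negate the determinants and contradict the rows of $1$s). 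Finally, your last step --- exhibiting a frieze whose rows have \emph{exact} period $2n$ --- is both unnecessary (the proposition claims a period, not the minimal one) and unavailable in general: the frieze \eqref{SixOne} is constant along rows, so minimality fails for it.
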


\subsection{Formal closed 2-frieze}
A closed $2$-frieze is generically determined by two consecutive columns.
Given $2m$ independent variables $x_1, \ldots, x_{2m}$, the following proposition defines a closed
$2$-frieze with values in the rational fields of fractions $\C(x_1,x_2,\ldots,x_{2m})$
containing the set of variables  $x_1, \ldots, x_{2m}$ into two consecutive columns.

\begin{prop}\cite{MOT}\label{formal}
There exists a unique closed $2$-frieze of width $m$, with values in the rational fields of fractions
$\C(x_1,x_2,\ldots,x_{2m})$ containing the following sequences
\begin{equation}
\label{ClustF}
 \begin{array}{cccccc}
\cdots&1& 1&1&1&\cdots
 \\[4pt]
&&x_1&x_{m+1}&&
 \\[4pt]
&&x_{m+2}&x_2&&
 \\[4pt]
&&x_3&x_{m+3}&&
 \\[4pt]
&&\vdots&\vdots&&
 \\
&&\vdots &\vdots &&\\
\cdots&1& 1&1&1&\cdots
\end{array}
\end{equation}
where $x_1$ is in position $v_{0,0}$.
Furthermore, all the entries of the $2$-frieze are Laurent polynomials in $x_1, \ldots, x_{2m}$.
\end{prop}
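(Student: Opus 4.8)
The plan is to prove existence and uniqueness together by propagating the entries column by column, and then to establish the Laurentness separately via the connection to cluster algebras.

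\medskip

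\emph{Existence and uniqueness.}
First I would observe that a closed $2$-frieze is determined by its behaviour on two consecutive full columns together with the boundary rows of $1$s. Concretely, label the two columns shown in \eqref{ClustF} as column $C_0$ and column $C_1$; reading \eqref{friezerule} as a recurrence, the entry $v_{i-\half,j+\half}$ (which lies strictly between $C_0$ and $C_1$, i.e. in the ``half-integer'' column) is forced by the four neighbours $v_{i-1,j},v_{i,j+1},v_{i,j},v_{i-1,j+1}$ lying in $C_0$ and $C_1$. This determines the intermediate column uniquely as polynomials in the $x_k$. Then the same rule, now applied with the freshly computed half-integer column playing the role of one of the two known columns, forces the next column $C_2$, and so on in both directions. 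The boundary conditions $v_{i-1,i}=v_{i+\frac m2,i-\frac m2}=1$ are automatically consistent because the two given columns already have $1$s at top and bottom, and the frieze rule preserves this; uniqueness is immediate from the fact that at each step the new entry is an explicit rational expression in previously determined entries. So the only real content in this part is checking that the recursion is well-posed, i.e. that one never has to divide by something that is not already known to be nonzero — but since the rule \eqref{friezerule} expresses $v_{i-\half,j+\half}$ \emph{without} division (it is just $AD-BC$), propagation forwards is purely polynomial and no division arises. The subtlety, if any, is that two consecutive columns determine the frieze only \emph{generically}; here it is clean because we work over the fraction field $\C(x_1,\dots,x_{2m})$, where the $x_k$ are algebraically independent, so everything is honestly defined. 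One must also check $2n$-periodicity (Proposition \ref{perio}) so that the infinite stripe is consistent, i.e. that after propagating all the way around one returns to the starting columns; this is where I would invoke Proposition \ref{perio} directly.

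\medskip

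\emph{Laurent property.}
For the final assertion — that every entry is a Laurent polynomial in $x_1,\dots,x_{2m}$ — the plan is to identify the entries of this formal $2$-frieze with cluster variables (or more precisely, with evaluations/specializations of cluster variables) in the cluster algebra attached to the quiver $A_2\times A_m$, as alluded to in the introduction and developed in Section \ref{ClustSect}. Under this identification the two given columns $x_1,\dots,x_{2m}$ form an initial seed, the mutations of the quiver correspond precisely to ``advancing the frieze by one diagonal step'' via the exchange relation, which matches \eqref{friezerule}, and then the Laurent phenomenon of Fomin--Zelevinsky (every cluster variable is a Laurent polynomial in any initial cluster) gives exactly the claim. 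So the key step is to set up the dictionary: which vertices of $A_2\times A_m$ correspond to which entries, and to verify that one mutation sequence reproduces one column-shift of the frieze.

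\medskip

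The main obstacle I expect is the bookkeeping in the Laurent-property step: making the correspondence between the grid of frieze entries and the quiver $A_2\times A_m$ precise enough that the frieze rule \eqref{friezerule} is visibly a cluster exchange relation, including getting the signs/orientations in the quiver right and handling the boundary rows of $1$s (which should correspond either to ``frozen'' variables set to $1$ or simply to the edge of the $A_m$ factor). The existence/uniqueness half is essentially a finite check and should be routine; the genuinely structural input is the cluster-algebra interpretation, which is why the proof is placed in this section rather than proved by bare-hands induction.
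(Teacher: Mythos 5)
Your overall plan --- propagate column by column for existence and uniqueness, then deduce the Laurent property by identifying two consecutive columns with a cluster of $\A(\Qc_m)=\A(A_2\times A_m)$ and invoking the Fomin--Zelevinsky Laurent phenomenon --- is exactly the route the paper takes: the proposition is imported from \cite{MOT}, and the remark following it, together with the theorem on $\A_m\subset\A(\Qc_m)$ and Remark \ref{bipart}, supply precisely the dictionary you ask for (two consecutive columns form a cluster, and the involutions $\mu_+$, $\mu_-$ advance the frieze by one column along the bipartite belt). So the second half of your proposal is sound in outline.

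The gap is in the first half, and it is concrete: the claim that ``propagation forwards is purely polynomial and no division arises'' is false, and it rests on a misreading of the indexing. The half-integer-indexed entries do not form a separate column sitting between two integer-indexed columns: in Figure \ref{vij} and in \eqref{closedfr} the two families interleave \emph{within} each column (a column reads $v_{i,i},v_{i+\half,i-\half},v_{i+1,i-1},\dots$), and the five entries entering the local rule occupy three consecutive columns, the centre $E=v_{i-\half,j+\half}$ lying in the \emph{same} column as the arms $B=v_{i-1,j+1}$ and $C=v_{i,j}$, while $A=v_{i-1,j}$ and $D=v_{i,j+1}$ sit one column to the left and to the right. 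Hence passing from two known columns to the next one means solving $AD-BC=E$ for $D$, i.e.\ computing $D=(E+BC)/A$, which divides by an entry of the older of the two known columns; from the fourth column onward these divisors are no longer among the algebraically independent $x_k$ but are previously computed rational functions, so the well-posedness issue you wave away is exactly the one point that requires an argument. It is easily repaired: prove inductively that every entry is a rational function that is defined and strictly positive at $x_1=\cdots=x_{2m}=1$ (using $D=(E+BC)/A$ with $A,B,C,E>0$ there), hence nonzero in $\C(x_1,\dots,x_{2m})$; the same induction yields uniqueness. Finally, $2n$-periodicity is not needed for the consistency of the infinite stripe --- it is a consequence recorded in Proposition \ref{perio}, not a compatibility condition to be verified.
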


The formal $2$-frieze characterized in the above Proposition is denoted by $F(x_1, \ldots, x_{2m})$.

\begin{ex}
\label{ClustFive}
Case $m=1$
$$
\begin{array}{cccccccccccccccc}
\cdots &1 & 1 & 1 & 1& 1& 1& 1& \cdots\\[6pt]
\cdots & x_1 & x_2&   \frac{x_2+1}{x_1}&  \frac{x_1+x_2+1}{x_1x_2}& \frac{x_1+1}{x_2}&x_1 & x_2&\cdots \\[6pt]
\cdots &1& 1& 1& 1& 1& 1& 1& \cdots \\
\end{array}
$$
\end{ex}

\begin{ex}
\label{ClustSix}
Case $m=2$
\begin{equation*}
\begin{array}{cccccccccccccccc}
\cdots &1 & 1 & 1 & 1& 1& 1&  1& 1& \cdots\\[6pt]
\cdots & x_1 & x_3&   \frac{x_3+x_2}{x_1}& \frac{(x_3+x_2)(x_4+x_1)}{x_1x_3x_4}
& \frac{(x_1+x_4)(x_2+x_3)}{x_2x_3x_4} & \frac{x_1+x_4}{x_2}&x_4 & x_2&\cdots \\[6pt]
\cdots & x_4 & x_2&   \frac{x_2+x_3}{x_4}& \frac{(x_2+x_3)(x_1+x_4)}{x_4x_2x_1}
& \frac{(x_4+x_1)(x_3+x_2)}{x_3x_1x_2} & \frac{x_4+x_1}{x_3}&x_1 & x_3&\cdots \\[6pt]
\cdots &1& 1& 1& 1& 1& 1& 1& 1& \cdots \\
\end{array}
\end{equation*}
\end{ex}

\begin{rem}
The Laurent phenomenon described in Proposition \ref{formal} was mentioned in \cite{Pro} and proved in \cite{MOT} using a link to cluster algebras
(this link also implies the periodicity described in Proposition \ref{perio} but periodicity has been established by elementary method in~\cite{MOT}).
As an easy consequence of the Laurent phenomenon one can obtain integral closed 2-friezes by setting the inital
variables $x_1, \ldots, x_{2m}$ to be equal to 1. The link to cluster algebras will actually provide more information. 
\end{rem}

\subsection{Cluster algebras: basic definitions}\label{cadef}
A cluster algebra is a commutative associative algebra.
This is a subalgebra of a field of rational fractions
in $N$ variables.
The cluster algebra is presented by generators and relations. 
The generators are collected in packages called \textit{clusters} of fixed cardinality $N$. 
The constant $N$ is called the rank of the algebra.
The generators and relations are not given from the beginning.
They are obtained recursively using a combinatorial procedure encoded in a matrix, or an oriented graph
with no loops and no $2$-cycles.

We give here an explicit construction of the (complex or real) cluster algebra  $\A(\Qc)$
starting from a finite oriented  connected graph $\Qc$ with no loops and no $2$-cycles
(there exists more general construction of cluster algebras but the one given here is enough for our purpose).
Let $N$ be the number of vertices of $\Qc$,
the set of vertices is then identified with the set $\{1, \ldots, N\}$.
The algebra $\A(\Qc)$ is a subalgebra of the field of fractions $\C(x_1,\ldots, x_N)$ in $N$  
variables $x_1,\ldots, x_N$ (or over $\R$, in the real case). 
The generators and relations of $\A(\Qc)$ are given using a recursive procedure called
\textit{seed mutations} that we describe below.

A \textit{seed} is a couple 
$$
\Sigma=\left((t_1, \ldots, t_N) , \;\Rc\right),
$$
where $\Rc$ is an arbitrary finite oriented graph with $N$ vertices
and where $t_1, \ldots, t_N$ are free generators of $\C(x_1,\ldots, x_N)$ labeled by the vertices of the graph $\Rc$. 
The \textit{mutation at  vertex}  $k$ of the seed 
$\Sigma$ is a new seed $\mu_k(\Sigma )$ defined by
\begin{enumerate}
\item[\textbullet]
$\mu_k(t_1, \ldots, t_N)=(t_1, \ldots, t_{k-1},t'_k, t_{k+1},\ldots, t_N)$ where
\begin{equation}\label{exrel}
\displaystyle
t'_k=\dfrac{1}{t_k}\left(\prod\limits_{\substack{\text{arrows in }\Rc\\ i\rightarrow k }}\; t_i
 \quad+\quad 
\prod\limits_{\substack{\text{arrows in }\Rc\\ i\leftarrow k }}\;t_i\right)
\end{equation}
\item[\textbullet] 
$\mu_k(\Rc)$ is the graph obtained from $\Rc$ by applying the following transformations
\begin{enumerate}
\item for each possible path $i\rightarrow k \rightarrow j$ in $\Rc$, add an arrow $i\rightarrow j$,
\item reverse all the arrows leaving or arriving at $k$,
\item remove a maximal collection of 2-cycles, 
\end{enumerate}
\end{enumerate}
(see Example \ref{exmut}  below for a seed mutation).

Starting from the initial seed $\Sigma_0=((x_1,\ldots, x_N), \Qc)$, one produces  $N$ new seeds 
$\mu_k(\Sigma_0)$, $k=1,\ldots, N$. 
Then one applies all the possible mutations to all of the created new seeds, and so on.
The set of rational functions appearing in any of the seeds produced during the mutation process
is called a \textit{cluster}. 
The functions in a cluster are called \textit{cluster variables}.
The cluster algebra $\A(\Qc)$ is the subalgebra of  $\C(x_1,\ldots, x_N)$ generated by all the cluster variables.

\begin{ex}\label{exmut}
In the case $n=4$, consider the seed $\Sigma=$
$$
(t_1,t_2,t_3,t_4), \quad
\Rc=
\xymatrix{
1\ar@{->}[r]
&2\ar@{->}[d]
\\
3\ar@{<-}[r]\ar@{->}[u]
&4
}.$$
The mutation at vertex 1 gives 
$$
\mu_1(t_1,t_2,t_3,t_4)=\Big(\frac{t_2+t_3}{t_1},t_2,t_3,t_4\Big), \quad
\mu_1(\Rc)=\quad
\xymatrix{
1\ar@{<-}[r]
&2\ar@{->}[d]\ar@{<-}[ld]
\\
3\ar@{<-}[r]\ar@{<-}[u]
&4}.
$$
Performing the mutation $\mu_2$  on $\mu_1(\Rc)$ leads to the following graph
$$
\mu_2\mu_1(\Rc)=\quad
\xymatrix{
1\ar@{->}[r]
&2\ar@{<-}[d]\ar@{->}[ld]
\\
3
&4}.
$$
The underlying non-oriented graph of $\mu_2\mu_1(\Rc)$ is  the Dynkin diagram of type $D_4$.
The algebra $A(\Rc)$ is referred to as the cluster algebra of type $D_4$ in the terminology of \cite{FZ2}.
It is known that in this case the mutation process is finite, 
meaning that applying all the possible mutations to all the seeds leads to a finite number of seeds and therefore to a finite number (24) of cluster variables.
\end{ex}

\subsection{Cluster algebras: fundamental results}\label{thcla}

To prove Theorem \ref{inffrieze} we will need the following fundamental theorems
on cluster algebras.

The first result relates the classification of cluster algebras to that of Lie algebras,
using Dynkin graphs which are any orientations of Dynkin diagrams.

\begin{theo}[Classification \cite{FZ2}]
The cluster algebra $\A(\Qc)$ has finitely many cluster variables if and only if the initial graph $\Qc$ is mutation-equivalent to a Dynkin graph of type $A,D,E$.
\end{theo}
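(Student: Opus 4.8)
The plan is to prove this by combining the Classification Theorem for cluster algebras of finite type with the connection to closed $2$-friezes, but the statement we are immediately asked to prove is the Classification Theorem itself, so I will sketch how that theorem is established in the Fomin--Zelevinsky framework, since the excerpt presents it as a cited ``fundamental result'' we may invoke.

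\medskip

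\emph{Setup and strategy.} First I would recall that the property ``$\A(\Qc)$ has finitely many cluster variables'' is equivalent to ``the mutation class of $\Qc$ is finite'' (finitely many seeds), since each seed contributes finitely many variables and, conversely, a variable appears only in finitely many ways if the seed graph is finite. So the goal becomes: classify finite mutation classes of graphs that moreover have finite \emph{exchange graph}. The approach has two directions. For the ``if'' direction, one shows that if $\Qc$ is an orientation of a Dynkin diagram $A,D,E$, then the cluster variables are in explicit bijection with the almost positive roots of the corresponding root system; this is proved by an inductive analysis of the mutation rule \eqref{exrel} and the graph mutation rule, checking that the $d$-vectors (denominator vectors) of cluster variables realize exactly the set $\{-\alpha_i\}\cup\Phi_{>0}$, which is finite. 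The key combinatorial input is that on a Dynkin tree the graph mutations never create edges of multiplicity $\geq 3$ and the process stabilizes; this is verified type by type (or uniformly via the theory of the associahedron/generalized associahedron of Fomin--Zelevinsky and Chapoton--Fomin--Zelevinsky).

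\medskip

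\emph{The ``only if'' direction.} Here I would argue contrapositively: if the underlying graph of $\Qc$ is not a disjoint union of Dynkin diagrams, then $\A(\Qc)$ has infinitely many cluster variables. The first reduction is to a \emph{minimal bad configuration}: since mutation is local-ish, if $\Qc$ contains (after some mutations) a full subgraph that is not of Dynkin type, one can often restrict to it. One shows that it suffices to handle the rank $2$ case and the affine (extended Dynkin) cases $\widetilde{A},\widetilde{D},\widetilde{E}$, because any non-Dynkin graph is mutation-equivalent to one containing an affine or ``wild'' subdiagram, or a rank-$2$ subgraph with an edge of multiplicity $\geq 2$. In rank $2$ with edge label $bc\geq 4$ the two cluster variables $t_k$ obtained by alternately mutating satisfy a linear recurrence whose characteristic roots have modulus $\neq 1$, so the sequence of variables is infinite (their degrees grow); the case $bc\in\{1,2,3\}$ recovers $A_2,B_2,G_2$ with $5,6,8$ variables respectively. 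For the affine types one exhibits an explicit infinite family of distinct cluster variables, e.g.\ using the additive categorification / Coxeter transformation: the denominator vectors are governed by the (infinite-order) Coxeter element acting on the root lattice of the affine root system, producing infinitely many distinct imaginary-root-shaped $d$-vectors. Finiteness of the exchange graph would force periodicity of this action, a contradiction.

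\medskip

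\emph{Main obstacle.} The hard part is the uniformity and bookkeeping in the ``only if'' direction: one must show that \emph{every} non-Dynkin graph with no loops or $2$-cycles is mutation-equivalent to one containing a recognizably infinite-type piece, and that containment of such a piece really does force infinitely many variables (this ``subdiagram'' or ``restriction'' argument is delicate because mutating the ambient graph can disturb the subgraph). Fomin and Zelevinsky handle this via the positivity of the symmetrized matrix / the associated quadratic form: $\Qc$ is of finite type iff a certain symmetric matrix $A(\Qc')$ (for some $\Qc'$ in the class) is positive definite, which by the classical Cartan--Killing classification happens exactly for $A,D,E$ (and $B,C,F,G$ in the valued/non-simply-laced setting, which collapse to the simply-laced graph case here). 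So the genuine content is: (i) the quadratic-form criterion for finiteness, and (ii) its equivalence with positive definiteness of a Cartan-type matrix. For the purposes of this paper, however, I would simply cite \cite{FZ2} for the full proof and use the theorem as stated, then apply it to $\Qc$ of type $A_2\times A_m$: its underlying graph is the product Dynkin diagram, which is \emph{not} of type $A,D,E$ for $m>4$ (it is of affine or wild type), hence $\A(A_2\times A_m)$ has infinitely many cluster variables --- the input needed for Theorem \ref{inffrieze}.
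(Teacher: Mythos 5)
The paper offers no proof of this statement: it is an imported result, quoted verbatim from Fomin--Zelevinsky \cite{FZ2} and used as a black box, so there is nothing internal to compare your argument against. Your ultimate recourse --- citing \cite{FZ2} and then applying the theorem to $\Qc_m=A_2\times A_m$, which for $m\geq 5$ is not mutation-equivalent to any $ADE$ Dynkin graph --- is exactly what the paper does, and your accompanying sketch of the Fomin--Zelevinsky proof (finite type via almost positive roots, infinite type via the rank-two and $2$-finiteness/positive-definiteness criteria) is a reasonable outline of the cited source rather than a divergence from the paper.
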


More general definition of a cluster algebra may allow to include all the Dynkin types in the classification. We do not need such general construction.

The second result establishes a surprising phenomenon of simplification in the expressions of cluster variables.
\begin{theo}[Laurent Phenomenon \cite{FZ1}]
Every cluster variable can be expressed as a Laurent polynomial with integer coefficients in the variables of any given cluster.
\end{theo}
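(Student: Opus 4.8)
The displayed statement is the Laurent Phenomenon of Fomin--Zelevinsky, and the natural plan is to reproduce the structure of their original argument in \cite{FZ1}. First I would reduce: it suffices to fix the initial seed $\Sigma_0=((x_1,\ldots,x_N),\Qc)$ and show that every cluster variable, rewritten via the exchange relations \eqref{exrel} in terms of $x_1,\ldots,x_N$, lies in the Laurent ring $L=\Z[x_1^{\pm1},\ldots,x_N^{\pm1}]$; the case of an arbitrary reference cluster then follows since any seed may play the role of the root. The one structural fact about $L$ that the whole proof rests on is that it is a unique factorization domain.

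Next I would organise the seeds as the vertices of the $N$-regular tree $\mathbb{T}_N$ rooted at $\Sigma_0$ and induct on the distance $d$ to the root. A cluster variable $t'_k$ first appearing at distance $d$ is obtained from a seed at distance $d-1$ through a single exchange relation $t_k\,t'_k=M_++M_-$, where $M_\pm$ are the two monomials in the remaining cluster variables of that seed. By the inductive hypothesis $t_k$ and every variable occurring in $M_\pm$ already lie in $L$, so $t'_k=(M_++M_-)/t_k$ belongs to the fraction field of $L$, and the entire problem collapses to proving that $t_k$ divides $M_++M_-$ in the UFD $L$.

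This divisibility cannot be seen from one mutation step alone: one must compare two routes through $\mathbb{T}_N$ reaching the seed that contains $t'_k$. The device that makes this comparison work is the Caterpillar Lemma of \cite{FZ1}, which isolates the minimal pattern of mutations along which the Laurent property propagates --- a path of the shape $\cdots\mu_i\,\mu_j\,\mu_i$ decorated with pendant edges, rather than an unrestricted sequence. Two ingredients feed the lemma: (a) a coprimality statement, namely that two cluster variables $t_i,t'_i$ obtained from one common seed by mutating twice at the same vertex $i$ are coprime in $L$, because their product $M_++M_-$ is a sum of two monomials in the other, algebraically independent, variables and so shares no irreducible factor with $t_i$; and (b) the fact that the exchange monomials $M_\pm$ transform under the graph mutation (the three local moves accompanying \eqref{exrel}) in a controlled multiplicative way. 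Feeding (a) and (b) into unique factorization in $L$ forces the cancellation $t_k\mid M_++M_-$, which closes the induction. Integrality of coefficients is then free: the relations \eqref{exrel} have coefficients in $\{0,1\}$, and multiplication and exact division inside $L$ both preserve $\Z$-coefficients.

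The hard part is precisely the last paragraph: the coprimality/divisibility bookkeeping. One has to be sure that along the comparison route no spurious irreducible polynomial creeps into a denominator, and this is exactly why the rigid caterpillar shape --- not an arbitrary chain of mutations --- is needed, and why the interaction between the exchange monomials and the three steps of the quiver mutation must be verified with care.
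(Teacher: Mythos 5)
The paper does not prove this statement at all: it is imported verbatim from Fomin--Zelevinsky \cite{FZ1} as a black-box ingredient for the proof of Theorem \ref{inffrieze}, so there is no internal argument to compare yours against. What you have written is a sketch of the original \cite{FZ1} proof (reduction to the initial seed, induction over the tree of seeds, the Caterpillar Lemma, coprimality and unique factorization in $\Z[x_1^{\pm1},\ldots,x_N^{\pm1}]$), and that is the right route --- it is essentially the only known general proof strategy, and you correctly identify where all the difficulty sits.

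Two points to tighten if this were to be expanded into an actual proof. First, ``two cluster variables $t_i,t'_i$ obtained from one common seed by mutating twice at the same vertex $i$'' is not what you want: $\mu_i$ is an involution, so mutating twice at $i$ returns the original seed. The coprimality statements actually used in the Caterpillar Lemma concern, on the one hand, a variable $t_i$ and its exchange partner $t'_i=\mu_i(t_i)$ across a single edge, and on the other hand the exchange \emph{polynomials} $M_++M_-$ attached to adjacent edges of the caterpillar, viewed in the ring generated by the untouched variables; the hypothesis one must verify is that these binomials are coprime and not divisible by any cluster variable. Second, the divisibility $t_k\mid M_++M_-$ is asserted but not derived; in \cite{FZ1} it is obtained by an explicit computation comparing the expansions along two short mutation paths (distance at most three from the root), and without carrying out that computation the induction does not close. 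As a summary of the cited proof your outline is faithful; as a standalone proof it stops exactly at the step you yourself flag as the hard part.
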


\begin{conj}[Positivity \cite{FZ1}]
The Laurent polynomials in the above theorem have positive integer coefficients.
\end{conj}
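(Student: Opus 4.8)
\medskip
\noindent\textbf{Proof proposal.} This is a statement quoted from the literature --- an open conjecture when the paper was written, later a theorem --- so I only sketch the proof strategies, organized by the combinatorial type of $\Qc$; note that for Theorem~\ref{inffrieze} only the acyclic case is actually needed. First I would treat an acyclic quiver $\Qc$ (which covers $A_2\times A_m$). The plan is to categorify: form the cluster category from the bounded derived category of representations of $\Qc$, and express each cluster variable, written as a Laurent polynomial in a fixed cluster, via the Caldero-Chapoton cluster character. That character writes a cluster variable as a sum --- over dimension vectors $\mathbf e$ of subrepresentations of a rigid module $M$ --- of $\chi(\Gr_{\mathbf e}(M))$ times an explicit Laurent monomial, where $\Gr_{\mathbf e}(M)$ is the quiver Grassmannian of subrepresentations of dimension $\mathbf e$ and $\chi$ the topological Euler characteristic. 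The essential step is to prove that these quiver Grassmannians admit a cell decomposition (equivalently, have vanishing odd cohomology), so that each $\chi$ is a non-negative integer; positivity of the Laurent expansion then follows term by term, and the conclusion is insensitive to the choice of reference cluster because seed mutation is realized categorically by mutation of the cluster-tilting object.

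For cluster algebras arising from triangulated surfaces I would instead use an explicit combinatorial expansion formula (Musiker-Schiffler-Williams): the cluster variable attached to an arc $\gamma$ equals a fixed Laurent monomial, coming from the arcs that $\gamma$ crosses, times $\sum_{P} x(P)$ summed over the perfect matchings $P$ of the snake graph of $\gamma$, each weight $x(P)$ being a monomial with coefficient $+1$; positivity is then immediate. This family does not contain $A_2\times A_m$, but it is the geometric model underlying the Coxeter-Conway side of the story.

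The hard part is a skew-symmetric quiver mutation-equivalent neither to an acyclic quiver nor to a surface quiver, where no module category is available. Here the plan is the scattering-diagram / theta-function approach of Gross-Hacking-Keel-Kontsevich: build a consistent scattering diagram over a suitable formal power series ring, attach to each cluster monomial its theta function, and prove, by induction over the wall-crossings --- whose automorphisms are unipotent with non-negative coefficients --- that theta functions, hence cluster variables after the relevant specialization, expand with non-negative coefficients in every cluster. Alternatively, for skew-symmetric exchange matrices one can use the compatible-pair combinatorics behind the greedy basis (Lee-Schiffler), and then treat the general skew-symmetrizable case by unfolding it to a skew-symmetric one. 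The main obstacle is precisely this last case: lacking both a categorification and a surface, one must build a dedicated positivity-carrying structure and check by hand that every mutation preserves it.
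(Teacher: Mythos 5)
The paper does not prove this statement, so there is nothing to compare your argument against: it is quoted verbatim from Fomin--Zelevinsky as a \emph{conjecture}, and the only thing the author actually establishes or uses is the remark immediately following it, namely that positivity is a theorem of Nakajima \cite{Nak} for bipartite quivers, which covers the quiver $\Qc_m=A_2\times A_m$ of \eqref{Graph} and is exactly what is needed in the proof of Lemma \ref{infclasse}. Your text is an accurate road map of how positivity was eventually settled in the literature (Caldero--Chapoton characters and quiver Grassmannians for acyclic quivers, Musiker--Schiffler--Williams for surfaces, Lee--Schiffler for general skew-symmetric exchange matrices, Gross--Hacking--Keel--Kontsevich for the skew-symmetrizable case), but as a proof it has a gap at every branch: each plan defers the entire difficulty to a key lemma that you do not establish. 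In the acyclic case the ``essential step'' you name --- a cell decomposition, or at least non-negativity of the Euler characteristic, of $\Gr_{\mathbf e}(M)$ for every rigid module $M$ --- is precisely the hard content of that approach and was itself a substantial open problem at the time this paper was written; in the scattering-diagram approach, consistency of the diagram and positivity of the wall-crossing automorphisms carry all the weight; and the proposed ``unfolding'' from skew-symmetric to skew-symmetrizable does not exist in general, which is why the last case required a genuinely new argument.

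For the purposes of this paper the efficient and correct move is much smaller than what you outline: observe that $\Qc_m$ is bipartite (as the author does in Remark \ref{bipart}) and invoke Nakajima's theorem for that case, rather than sketch a proof of the full conjecture. If you do want to record the general statement as a theorem, you should cite it as such rather than present an outline whose every step is itself an unproved theorem.
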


The positivity conjecture has been proved in several cases. 
In particular, it has been proved\footnote{I am grateful to the anonymous referee for providing me with the reference.} by Nakajima \cite{Nak} in the case where the graph is bipartite.
We will be interested in the cluster algebra associated to the quiver \eqref{Graph} (see below)
which is a bipartite graph. So, in our case, the positivity conjecture is a theorem; we will need this statement in the proof of Lemma \ref{infclasse}.

\subsection{Algebra of functions on the closed 2-frieze }
We denote by $\A_m$ the subalgebra of $\C(x_1, \ldots, x_{2m})$ generated by the entries of the frieze $F(x_1, \ldots, x_{2m})$, defined by \eqref{ClustF}.

\begin{theo}[\cite{MOT}]
The algebra $\A_m$ associated with the 2-frieze $F(x_1, \ldots, x_{2m})$ is a subalgebra of the cluster algebra $\A( \Qc_m)$, where
$\Qc_m$ is the following oriented graph
\begin{equation}
\label{Graph}
\xymatrix{
&1\ar@{->}[r]
&2\ar@{<-}[r]\ar@{->}[d]
&3\ar@{->}[r]
&\cdots
&\cdots \ar@{<-}[r]
&m-1\ar@{->}[r]
&m\ar@{->}[d]\\
&
m+1\ar@{<-}[r]\ar@{->}[u]
&m+2\ar@{->}[r]
&m+3\ar@{->}[u]\ar@{<-}[r]
&\cdots&\cdots \ar@{->}[r]
&2m-1\ar@{->}[u]\ar@{<-}[r]
&2m\\
}
\end{equation}
Moreover, the set of variables contained in two consecutive columns of the 2-frieze
$F(x_1, \ldots, x_{2m})$ is a cluster in $\A( \Qc_m)$.
\end{theo}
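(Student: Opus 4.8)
The plan is to produce an explicit sequence of mutations on the quiver $\Qc_m$ of \eqref{Graph} that, applied to the initial seed $\Sigma_0 = ((x_1,\ldots,x_{2m}),\Qc_m)$, visibly reproduces the propagation rule \eqref{friezerule} of the $2$-frieze. Concretely, I would first recall from Proposition \ref{formal} that the formal frieze $F(x_1,\ldots,x_{2m})$ is uniquely determined by the two consecutive columns displayed in \eqref{ClustF}, and that the frieze is generated column-by-column: knowing two adjacent columns, the frieze rule \eqref{friezerule} determines the next column on the right (and the previous one on the left) by rational expressions. So it suffices to show that each such ``shift by one column'' operation is realized by a composition of seed mutations, starting from the cluster sitting in two consecutive columns, and that the quiver $\Qc_m$ returns to itself (up to the relabelling induced by the shift) after one full column-shift.

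The key computation is to match the mutation relation \eqref{exrel} against the frieze relation \eqref{friezerule}. Reading \eqref{friezerule} as $v_{i-\half,j+\half} = v_{i-1,j}\,v_{i,j+1} - v_{i,j}\,v_{i-1,j+1}$ is a difference, not a sum, so the direct match is with the rewritten form obtained by moving one term across: one checks that computing the entry in a new column from the two columns to its left is exactly an exchange relation of the form $t_k t_k' = \prod_{i\to k} t_i + \prod_{i\leftarrow k} t_i$ once the boundary $1$'s are taken into account (they contribute trivial factors). I would organize this by labelling the vertices of $\Qc_m$ so that the odd/even split and the two rows of \eqref{Graph} correspond to the two consecutive frieze columns of \eqref{ClustF}, then mutate at the vertices of one column in the order dictated by the arrows (roughly: alternate between the two rows, proceeding down the column), verifying after each single mutation that (a) the new cluster variable is the next frieze entry predicted by \eqref{friezerule}, and (b) the mutated quiver has the shape of $\Qc_m$ with the column slid over by one. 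This is an induction on the position in the column, using the explicit mutation rule for arrows (add $i\to j$ for paths $i\to k\to j$, reverse arrows at $k$, cancel $2$-cycles).

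Once the one-column shift is established as a sequence of mutations preserving (the isomorphism class of) $\Qc_m$, it follows that every entry of the frieze appears as a cluster variable of $\A(\Qc_m)$: entries in the starting two columns are the initial variables $x_1,\ldots,x_{2m}$ (a cluster by construction), and every other entry is obtained by finitely many column-shifts to the left or right, hence by finitely many mutations, hence lies in $\A(\Qc_m)$. Therefore the algebra $\A_m$ generated by all frieze entries is a subalgebra of $\A(\Qc_m)$, and the two consecutive columns form a cluster. The periodicity of Proposition \ref{perio} is consistent with this: after $2n = 2(m+4)$ steps the frieze returns to itself, matching a periodic sequence of mutations.

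The main obstacle is purely bookkeeping: getting the labelling of $\Qc_m$ and the order of mutations exactly right so that the quiver is genuinely invariant (not merely mutation-equivalent in some uncontrolled way) under one column-shift, and handling the two ends of the column where the boundary rows of $1$'s interact with the exchange relations — there the products $\prod_{i\to k}t_i$ or $\prod_{i\leftarrow k}t_i$ degenerate because some neighbours are the fixed entry $1$, and one must check the arrow-mutation rules still produce the correct local quiver shape. I would treat the small cases $m=1,2$ explicitly (matching Examples \ref{ClustFive} and \ref{ClustSix}, and Example \ref{exmut} which already shows $\Qc_1$-type behaviour relates to type $A_2\times A_1$ phenomena) as a sanity check before writing the general inductive step. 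Note also that the statement claims $\A_m$ is a \emph{subalgebra} of $\A(\Qc_m)$, not equal to it: this is exactly because the mutations along frieze column-shifts are only a sub-collection of all mutations of $\Qc_m$, so no surjectivity needs to be proven — which is what makes the argument tractable.
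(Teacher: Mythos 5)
Your strategy is, in substance, the argument behind this theorem --- which the paper itself does not prove but imports from \cite{MOT}; the mechanism is sketched in Remark \ref{bipart}. The passage from two consecutive columns to the next pair of columns is realized by mutating at all the vertices carrying the left-hand column, and the exchange relation \eqref{exrel} at such a vertex is exactly the frieze rule \eqref{friezerule} rewritten as $AD=E+BC$, with the boundary $1$'s accounting for the missing arrows at the top and bottom. One observation both simplifies and corrects your bookkeeping: the vertices holding a single column of \eqref{ClustF} form precisely one class of the bipartition of $\Qc_m$ (the column containing $x_1,x_{m+2},x_3,\dots$ versus the one containing $x_{m+1},x_2,x_{m+3},\dots$), hence are pairwise non-adjacent; the $m$ mutations therefore commute and there is no order ``dictated by the arrows'' to get right --- the composite is the involution $\mu_+$ (resp.\ $\mu_-$) of Remark \ref{bipart}. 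Accordingly, your invariant (b) should not be checked after each single mutation (the intermediate quivers do \emph{not} have the shape of a shifted $\Qc_m$) but only after the full column of mutations, where one finds $\mu_+(\Qc_m)=\Qc_m^{\hbox{op}}\cong\Qc_m$ via the relabelling $i\mapsto i+m \bmod 2m$, and $\mu_-\mu_+(\Qc_m)=\Qc_m$. With that adjustment the induction closes exactly as you describe: every pair of consecutive columns is a seed on the bipartite belt, every frieze entry is a cluster variable, and $\A_m\subset\A(\Qc_m)$.
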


Note that the orientation of the last square in \eqref{Graph} depends on the parity of $m$.
Note also that  $\Qc_m$ is the cartesian product of two Dynkin graphs: $\Qc_m=A_2\times A_m$.

\begin{rem}
It is well known that the graph $\Qc_m$ is mutation equivalent to the Dynkin graph of type $D_4, E_6, E_8$
for $m=2,3,4$ respectively. For $m\geq 5$, the graph  $\Qc_m$ is not mutation equivalent to any Dynkin graph.
Therefore, the number of cluster variables in $\A( \Qc_m)$ is infinite for $m\geq{}5$.
\end{rem}

\begin{rem}\label{bipart} (i) The algebra $\A_m$ is related to what Fomin and Zelevinsky \cite{FZ4} called the bipartite belt.
Indeed, the graph $\Qc_m$ is bipartite, i.e. 
one can associate a sign $\e(i)=\pm$ to each vertex $i$
of the graph so that any two connected vertices in
$\Qc_m$ have different signs. Let us assume that $\e(1)=+$ 
(this determines automatically all the signs of the vertices).

Consider the iterated mutations
$$
\mu_+=\prod_{i: \e(i)=+}\;\mu_i, \qquad 
\mu_-=\prod_{i:\e(i)=-}\;\mu_{i}.
$$
Note that $\mu_i$ with $\e(i)$ fixed commute with each other, and therefore $\mu_+$ and $\mu_-$ are involutions.

One can easily check that
the result of the mutation of the graph \eqref{Graph} by $\mu_+$ and $\mu_-$
is the same graph with reversed orientation:
$$
\mu_+(\Qc_m)=\Qc_m^{\hbox{op}},
\qquad
\mu_-(\Qc_m^{\hbox{op}})=\Qc_m.
$$

Consider the seeds of $\A(\Qc_m)$
obtained from $\Sigma_0$ by applying successively $\mu_+$ or $\mu_{-}$:
\begin{equation*}
\Sigma_0,\quad
\mu_+(\Sigma_0),\quad \mu_-\mu_+(\Sigma_0),\quad \ldots, \quad
\mu_{\pm}\mu_{\mp}\cdots \mu_-\mu_+(\Sigma_0),\quad \ldots
\end{equation*}
The cluster variables in each of the above seeds correspond
precisely to two consecutive columns in the 2-frieze pattern \eqref{ClustF}.
This set of seeds is called the bipartite belt of $\A(\Qc_m)$, see \cite{FZ4}.

(ii)
Entries of a 2-frieze $F(x_1, \ldots, x_{2m})$ are the cluster variables in $\A(\Qc_m)$
that can be obtained from the initial seed by applying sequences of $\mu_+$ and $\mu_{-}$.
It is not known how to characterize the cluster variables of $\A(\Qc_m)$ that do not appear in the 2-frieze.

\end{rem}

\subsection{Counting integral 2-friezes}
We are now interested in the integral closed 2-friezes, i.e. 
arrays as \eqref{closedfr} in which the entries $v_{i,j}$ are positive integers.
Due to periodicity, see Proposition~\ref{perio}, we represent a closed 2-frieze by a fundamental fragment of size $2n\times(n-4)$ (we will always choose the fragment whose entries in the first row are $v_{0,0},\ldots, v_{n-\half, n-\half}$).
Repeating the same fragment infinitely many times on left and right of the initial one, will lead to the complete infinite 2-frieze.

It is clear that two different fragments can produce to the same infinite frieze. 
For instance, permuting cyclically the columns of a fragment gives another fragment that produces the same infinite frieze. 
Also, rewriting a fragment from right to left may lead to another well-defined infinite frieze.

Define the following two operations on a fragment
\begin{equation}\label{tausig}
\begin{array}{lcl}
\tau \;\cdot \;
 \begin{matrix}
 1&1&\cdots&1&1 \\
 a_1&a_2&\cdots&a_{2n-1}&a_{2n}\\ 
  b_1&b_2&\cdots&b_{2n-1}&b_{2n}\\ 
  \vdots&  \vdots&  &\vdots&  \vdots\\
  1&1&\cdots&1&1 \\[4pt]
\end{matrix}
 &=&
 \begin{matrix}
 1&1&\cdots&1&1 \\
 a_2&a_3&\cdots&a_{2n}&a_{1}\\ 
  b_2&b_3&\cdots&b_{2n}&b_{1}\\ 
  \vdots&  \vdots&  &\vdots&  \vdots\\
  1&1&\cdots&1&1 \\[4pt]
\end{matrix}
\\[40pt]
\sigma\; \cdot
 \begin{matrix}
 1&1&\cdots&1&1 \\
 a_1&a_2&\cdots&a_{2n-1}&a_{2n}\\ 
  b_1&b_2&\cdots&b_{2n-1}&b_{2n}\\ 
  \vdots&  \vdots&  &\vdots&  \vdots\\
  1&1&\cdots&1&1 \\[4pt]
\end{matrix}
&=&
 \begin{matrix}
 1&1&\cdots&1&1 \\
 a_{1}&a_{2n}&\cdots&a_3&a_2\\ 
b_{1}  &b_{2n}&\cdots&b_3&b_2\\ 
  \vdots&  \vdots&  &\vdots&  \vdots\\
  1&1&\cdots&1&1 \\[4pt]
\end{matrix}
\\
\end{array}
\end{equation}
Using the definition of closed 2-friezes as maps $v:(i,j)\mapsto{}v_{i,j}$, one
has
$$
\tau\cdot{}v:(i,j)\mapsto{}v_{i+\half,j+\half},
\qquad
\sigma\cdot{}v:(i,j)\mapsto{}v_{-j,-i}\,.
$$

\begin{prop}
The operations $\tau$ and $\sigma$ generate an action of the dihedral group of order $4n$ on 
the set of fragments of integral closed 2-friezes of width $n-4$.
\end{prop}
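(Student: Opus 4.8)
The plan is to verify that the abstract dihedral relations hold when $\tau$ and $\sigma$ act on fragments, and then to check the action is well-defined (i.e. sends fragments of closed integral 2-friezes of width $n-4$ to fragments of the same type). Concretely, I would first record the two descriptions of each operation: the combinatorial one in \eqref{tausig} on the $2n\times(n-4)$ fragments, and the functional one $\tau\cdot v:(i,j)\mapsto v_{i+\half,j+\half}$ and $\sigma\cdot v:(i,j)\mapsto v_{-j,-i}$. I would use the functional description for the group-theoretic computation, since the compositions are transparent there, and the combinatorial description only to see that we land back in the space of well-defined fragments.

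The core computation has three steps. \textbf{Step 1: $\tau$ has order $2n$.} Iterating, $\tau^k\cdot v:(i,j)\mapsto v_{i+\frac k2,j+\frac k2}$, so $\tau^{2n}\cdot v:(i,j)\mapsto v_{i+n,j+n}=v_{i,j}$ by the periodicity in Proposition~\ref{perio}, and no smaller power is the identity on a generic fragment; hence $\tau$ generates a cyclic group of order $2n$. \textbf{Step 2: $\sigma$ is an involution.} Applying $\sigma$ twice, $(i,j)\mapsto(-j,-i)\mapsto(-(-i),-(-j))=(i,j)$, so $\sigma^2=\id$. One must also check $\sigma\cdot v$ really satisfies the frieze rule \eqref{friezerule} and the boundary conditions: substituting into \eqref{friezerule} the reflected indices turns the relation $v_{i-1,j}v_{i,j+1}-v_{i,j}v_{i-1,j+1}=v_{i-\half,j+\half}$ into the same relation at reflected arguments (the product $AD-BC$ is symmetric under swapping $A\leftrightarrow D$ and $B\leftrightarrow C$, which is exactly what the reflection does to the local cross), and the two boundary rows of $1$s are preserved because the map $(i,j)\mapsto(-j,-i)$ preserves the difference $i-j$ up to sign and the stripe $-1\le i-j\le m$ is symmetric after the shift built into $\sigma$. \textbf{Step 3: the braid/dihedral relation.} Compute $\sigma\tau\sigma$: tracking $(i,j)\xmapsto{\sigma}(-j,-i)\xmapsto{\tau}(-j+\half,-i+\half)\xmapsto{\sigma}(i-\half,j-\half)$, which is $\tau^{-1}\cdot v$. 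So $\sigma\tau\sigma=\tau^{-1}$, equivalently $(\sigma\tau)^2=\id$. Together with Steps 1 and 2, the relations $\tau^{2n}=\sigma^2=(\sigma\tau)^2=\id$ are precisely a presentation of the dihedral group of order $4n$, and a short argument (the orbit of a generic fragment has exactly $4n$ elements: $2n$ cyclic shifts of the fragment and $2n$ of its reversal) shows the action does not factor through a smaller quotient, so the generated group is exactly dihedral of order $4n$.

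Finally I would note that the combinatorial formulas in \eqref{tausig} visibly send a $2n\times(n-4)$ array bounded by rows of $1$s to another such array, and that integrality (all entries positive integers) is preserved since the operations only permute or reverse existing entries; so the action restricts to the set of fragments of integral closed 2-friezes of width $n-4$ as claimed. The step I expect to require the most care is Step 2, verifying that $\sigma\cdot v$ is still a legitimate closed 2-frieze: one has to match the index bookkeeping in \eqref{friezerule} and especially pin down the half-integer shift implicit in $\sigma$ so that the top and bottom boundary rows of $1$s map to each other correctly rather than getting swapped with an interior row. Everything else is a direct substitution in the functional description plus the periodicity already granted by Proposition~\ref{perio}.
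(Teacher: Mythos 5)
Your proposal is correct and follows essentially the same route as the paper, whose entire proof is the one-line verification of the relations $\sigma\tau\sigma=\tau^{-1}$ and $\sigma^2=\tau^{2n}=\id$; your Steps 1--3 carry out exactly these checks, using the periodicity of Proposition~\ref{perio} for $\tau^{2n}=\id$ as intended. The additional care you take with well-definedness of $\sigma\cdot v$ and with the orbit-size argument is sound and only fills in details the paper leaves implicit.
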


\begin{proof}
One checks the relations $\s\t\s=\t^{-1}$ and $\s^2=\t^{2n}=\id$.
\end{proof}

In the sequel we are interested in the problem of counting fragments of
integral closed 2-friezes of a given width.

\begin{ex}
\label{notsurj}
It was proved in \cite{MOT} that the following five fragments of 2-friezes produce
all the integral 2-friezes of width 2 (modulo the action of the dihedral group).
\begin{equation}
\label{SixThree}
\begin{array}{rrrrrrrrrrrrr}
1&1&1&1&1&1&1&1&1&1&1&1\\
1&1&2&4&4&2&1&1&2&4&4&2\\
1&1&2&4&4&2&1&1&2&4&4&2\\
1&1&1&1&1&1&1&1&1&1&1&1
\end{array}
\end{equation}
\begin{equation}
\label{SixFour}
\begin{array}{rrrrrrrrrrrrr}
1&1&1&1&1&1&1&1&1&1&1&1\\
1&1&3&6&3&1&1&2&3&3&3&2\\
1&2&3&3&3&2&1&1&3&6&3&1\\
1&1&1&1&1&1&1&1&1&1&1&1
\end{array}
\end{equation}
\begin{equation}
\label{SixFive}
\begin{array}{rrrrrrrrrrrrr}
1&1&1&1&1&1&1&1&1&1&1&1\\
1&1&4&6&2&1 & 2&3&2&2&4&3 \\
2&3&2&2&4&3 &1&1&4&6&2&1\\
1&1&1&1&1&1&1&1&1&1&1&1
\end{array}
\end{equation}
\begin{equation}
\label{SixTwo}
\begin{array}{rrrrrrrrrrrrr}
1&1&1&1&1&1&1&1&1&1&1&1\\
1&3&5&2&1&3&5&2&1&3&5&2\\
5&2&1&3&5&2&1&3&5&2&1&3\\
1&1&1&1&1&1&1&1&1&1&1&1
\end{array}
\end{equation}
\begin{equation}
\begin{array}{rrrrrrrrrrrrr}
1&1&1&1&1&1&1&1&1&1&1&1\\
2&2&2&2&2&2&2&2&2&2&2&2\\
2&2&2&2&2&2&2&2&2&2&2&2\\
1&1&1&1&1&1&1&1&1&1&1&1
\end{array}\label{SixOne}
\end{equation}
The action of the dihedral group on the above fragments
leads to 51 different fragments:
the orbits of the fragments \eqref{SixThree}-\eqref{SixOne} 
contain  6, 12, 24, 8 and 1 elements, respectively.
\end{ex}

\subsection{Integral 2-friezes as evaluation of cluster variables}\label{defev}
The easiest way to obtain a closed integral 2-frieze is to make an evaluation 
of the formal 2-frieze $F( x_1, \ldots, x_{2m})$
by setting all the initial variables $x_i=1$. 
All the entries in the resulting frieze will be positive integers.
Indeed, the Laurent phenomenon ensures that the entries are well-defined  and integers, 
and the local rule \eqref{friezerule} ensures that
the entries are positive (here we do not need the positivity conjecture).

The above idea can be extended by setting all the variables in an arbitrary cluster to be equal to~1.
Given an arbitrary cluster $\bc=(c_1,\ldots,c_{2m})$ in  $\A(\Qc_m)$,
every entry in the  2-frieze $F( x_1, \ldots, x_{2m})$
can be expressed as a Laurent polynomial in $c_i$. 
This gives a new formal 2-frieze $F(\bx(\bc))$.
Setting $c_i=1$ for all $i$, all the entries of $F(\bx(\bc))$ become positive integers.
Indeed, they are integers since their expressions are Laurent polynomials in $\bc$, and they are positive because 
the expressions are obtained from $\bc$ using a sequence of exchange relations that are subtraction free.
This procedure defines a map
$$
\begin{array}{rcl}
\ev: \{ \text{cluster of } \A(\Qc_m)\} &\rightarrow& \{\text{fragment of integral 2-frieze of width }m\}\\[6pt]
\bc &\mapsto & F(\bx(\bc))\left|_{\bc=(1,\ldots,1)}\right..\\[4pt]
\end{array}
$$
where the fragment representing the frieze is chosen starting by the two columns containing the values of $(x_1,x_2,\ldots)$.
\begin{defn}
Integral friezes produced by the map  $\ev$ are called \textit{unitary friezes}.
\end{defn}

\begin{rem}
The map $\ev$ is not necessarily surjective. For instance, in the case of $2$-friezes of width 2,
 there are exactly 51 fragments, see Example \ref{notsurj}, 
 but the corresponding cluster algebra is of Dynkin type $D_4$ which is known to have 50 clusters. 
 One therefore deduces that at least one fragment is not a unitary frieze.
\end{rem}

\begin{lem}\label{unitdihed}
If a fragment of $2$-frieze is in the image of $\ev$, then all the fragments obtained under the action of the dihedral group are also in the image of $\ev$.
\end{lem}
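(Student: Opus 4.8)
The plan is to reduce the dihedral group action on fragments to corresponding transformations on clusters of $\A(\Qc_m)$, and then to show that the evaluation map $\ev$ intertwines these two actions (up to the choice of reference columns). Since the dihedral group is generated by $\tau$ and $\sigma$, it suffices to treat these two generators separately.

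First I would handle $\tau$. If $\bc$ is a cluster and $\ev(\bc)=F(\bx(\bc))|_{\bc=(1,\dots,1)}$, then $\tau\cdot\ev(\bc)$ is, by the formula $\tau\cdot v:(i,j)\mapsto v_{i+\half,j+\half}$, nothing but the same infinite frieze read off starting two columns further to the right. But those two shifted columns of $F(x_1,\dots,x_{2m})$ are themselves a cluster of $\A(\Qc_m)$ by the theorem quoted from \cite{MOT} (every pair of consecutive columns is a cluster), and more precisely, in the bipartite-belt picture of Remark \ref{bipart}, passing from one pair of consecutive columns to the next is exactly one step $\mu_+$ or $\mu_-$. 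Hence there is a cluster $\bc'$ — obtained from $\bc$ by the same mutation sequence — whose defining columns are these shifted columns, and $\ev(\bc')=\tau\cdot\ev(\bc)$. (One must be a little careful that evaluating at $\bc=(1,\dots,1)$ is compatible with the change of cluster: this is automatic because all exchange relations are subtraction-free, so specializing the new initial variables $\bc'$ to $1$ gives the same integers as specializing the old ones, both equalling the specialization of the universal Laurent expressions of the frieze entries.)

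Next I would treat $\sigma$, using $\sigma\cdot v:(i,j)\mapsto v_{-j,-i}$. The key observation is that the involution $(i,j)\mapsto(-j,-i)$ is a symmetry of the local frieze rule \eqref{friezerule} and of the boundary conditions, so it carries the formal frieze $F(x_1,\dots,x_{2m})$ to another formal frieze, whose two reference columns are a relabeling/reordering of the $x_i$ — concretely, it corresponds to an automorphism of the graph $\Qc_m$ (the flip exchanging the two $A_2$-factor vertices in each column, together with a reversal). A graph automorphism of $\Qc_m$ induces a permutation of clusters of $\A(\Qc_m)$ commuting with mutations, hence carries clusters to clusters; applying it to $\bc$ produces a cluster $\bc''$ with $\ev(\bc'')=\sigma\cdot\ev(\bc)$, again because the whole construction is equivariant and specialization at $(1,\dots,1)$ is permutation-invariant.

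Finally, since $\tau$ and $\sigma$ generate the dihedral group and each of them lifts the $\ev$-image to another $\ev$-image, so does every element of the group, which is the claim. The main obstacle is purely bookkeeping: matching the shift $\tau$ on columns with the precise $\mu_\pm$ step in the bipartite belt, and identifying $\sigma$ with the correct automorphism of $\Qc_m$ (including the $m$-parity subtlety in the last square of \eqref{Graph}); once the dictionary columns $\leftrightarrow$ clusters is set up, equivariance of $\ev$ under both generators is essentially formal, relying only on the subtraction-free nature of the exchange relations already used to define $\ev$.
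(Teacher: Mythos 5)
Your strategy matches the paper's: reduce to the generators of the dihedral group and realize each of them on the cluster side by combining a bipartite-belt step $\mu_+$ with the relabeling $i\mapsto i+m$ (the automorphism of $\Qc_m$ swapping its two rows), the subtraction-free exchange relations guaranteeing that evaluation at $(1,\ldots,1)$ is carried along. The one point where your dictionary is off is worth noting: since $\mu_+$ is an involution, applying the \emph{same} mutation sequence starting from the shifted cluster $\mu_+\bx$ produces the reflection $\sigma\cdot\ev(\bc)$ rather than the shift $\tau\cdot\ev(\bc)$ (the paper realizes $\sigma$ by inserting $\mu_+$, realizes $\sigma\tau$ by the pure relabeling $i\mapsto i+m$, and obtains $\tau$ by composing the two), but as $\tau$ together with any single reflection generates the whole dihedral group, your bookkeeping, once carried out, closes in the same way.
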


\begin{proof} Let us consider a fragment $\ev(\bc)$, 
for some cluster $\bc=\mu_{i_1}\cdots \mu_{i_k}(\bx)$.
To describe the action $\s$  we introduce the sequence of mutations
 $\mu_+:=\mu_{m+1}\mu_2\mu_{m+3}\mu_4\cdots$ (note that $\mu_+^2=\id$, see Remark \ref{bipart}).
One has
$$
 \sigma\cdot \ev(\bc)= \ev(\mu_{{i_1}}\cdots \mu_{{i_k}}\mu_+\bx).
$$
The action that gives the fragment with the first two columns in the reverse order, 
 i.e. the action of $\sigma\tau $, is easily obtained by reversing the roles of the indices $i\leftrightarrow i+m$. Hence,
$$
 \sigma\tau\cdot \ev(\bc)= \ev(\mu_{\overline{i_1}}\cdots \mu_{\overline{i_k}}\bx).
$$
where we use the notation $\bar{i}:=i+m \mod 2m$.
One finally deduces 
$$
\tau\cdot \ev(\bc)= \ev(\mu_{\overline{i_1}}\cdots \mu_{\overline{i_k}}\mu_{+}\bx).
$$

\end{proof}

\begin{prop}\label{unitfrieze}
The integral 2-friezes \eqref{SixThree}-\eqref{SixTwo} are unitary friezes. 
Under the action of the dihedral group the friezes \eqref{SixThree}-\eqref{SixTwo}  
produce 50 different friezes coming for the evaluation of the 50 clusters of the cluster algebra
$\A_{\Qc_2}\simeq \A(D_4)$. Consequently, \eqref{SixOne} is not a unitary frieze.
\end{prop}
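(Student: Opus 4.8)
The plan is to exhibit, for each of the five fragments \eqref{SixThree}--\eqref{SixTwo}, an explicit cluster $\bc$ of $\A(\Qc_2)\simeq\A(D_4)$ whose evaluation $\ev(\bc)$ equals that fragment, and then to count. First I would recall the elementary remark that $\ev(\bx)=\ev((1,\ldots,1))$ is the all-entries-positive frieze obtained by specializing the formal frieze $F(x_1,\ldots,x_4)$ of Example \ref{ClustSix} at $x_i=1$; a direct substitution in the formulas of Example \ref{ClustSix} gives the fragment \eqref{SixThree}. So \eqref{SixThree} is a unitary frieze. Then, applying Lemma \ref{unitdihed}, the entire dihedral orbit of \eqref{SixThree} consists of unitary friezes; this orbit has $6$ elements by Example \ref{notsurj}.

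Next I would produce one cluster for each of the remaining fragments \eqref{SixFour}, \eqref{SixFive}, \eqref{SixTwo}. Concretely, one runs the bipartite-belt mutations $\mu_+,\mu_-$ (as in Remark \ref{bipart}) and possibly a few further mutations on the $D_4$-type quiver $\Qc_2$, specializes the resulting cluster to all $1$'s, and reads off the two consecutive columns it fills into the frieze \eqref{ClustF}; by the subtraction-free nature of the exchange relations the outcome is automatically a fragment of an integral $2$-frieze. The claim is that suitable choices hit \eqref{SixFour}, \eqref{SixFive} and \eqref{SixTwo} respectively. Invoking Lemma \ref{unitdihed} again, the dihedral orbits of these three fragments --- of sizes $12$, $24$ and $8$ by Example \ref{notsurj} --- all lie in the image of $\ev$. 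Adding the orbit of \eqref{SixThree}, we obtain $6+12+24+8=50$ distinct unitary friezes among the $51$ fragments.

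For the counting half of the statement I would use the classification theorem (\cite{FZ2}): since $\Qc_2$ is mutation-equivalent to the Dynkin quiver $D_4$, the cluster algebra $\A(\Qc_2)\simeq\A(D_4)$ has exactly $50$ clusters. The map $\ev$ therefore has image of size at most $50$; we have just exhibited $50$ distinct fragments in its image, so $\ev$ induces a bijection from the $50$ clusters onto these $50$ fragments. Since Example \ref{notsurj} tells us there are exactly $51$ fragments in total (the five orbits being disjoint and of the stated sizes), the one remaining fragment, namely the orbit of \eqref{SixOne} (a singleton), cannot be a unitary frieze. This is the desired conclusion.

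The main obstacle is the explicit part: finding, for each of \eqref{SixFour}, \eqref{SixFive}, \eqref{SixTwo}, a concrete mutation sequence on $\Qc_2$ whose all-ones evaluation reproduces exactly that fragment (up to the dihedral action, so that one really only needs one representative per orbit). This is a finite but somewhat delicate bookkeeping task --- tracking both the quiver mutations and the Laurent-polynomial cluster variables through a handful of steps --- and it is where a computer check, or a careful hand computation using the $A_2\times A_2$ product structure of $\Qc_2$, is genuinely needed; everything else (the substitution for \eqref{SixThree}, the orbit-size arithmetic, and the appeal to the $D_4$ cluster count) is routine.
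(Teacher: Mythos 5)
Your logical skeleton is exactly the paper's: realize \eqref{SixThree} as $\ev(\bx)$, realize each of \eqref{SixFour}, \eqref{SixFive}, \eqref{SixTwo} as $\ev$ of some explicit cluster, propagate through the dihedral orbits via Lemma \ref{unitdihed} to get $6+12+24+8=50$ unitary fragments, and compare with the count of $50$ clusters in type $D_4$ to exclude \eqref{SixOne}. The counting half of your argument is complete and correct. The gap is that the first half --- the actual content of the proposition's first sentence --- is only asserted: you write ``the claim is that suitable choices hit \eqref{SixFour}, \eqref{SixFive} and \eqref{SixTwo}'' and defer the verification. Without exhibiting those clusters the proof does not close, since it is precisely the existence of such clusters that is at stake (indeed the whole point of the proposition is that for \eqref{SixOne} no such cluster exists, so one cannot take realizability for granted).

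The paper supplies the missing data, and it is short: \eqref{SixFour} is $\ev(\mu_2\bx)$; \eqref{SixFive} is $\ev(\bd)$ with $\bd=\mu_2\mu_4(\bx)$, where one computes the four cluster variables $d_i$ explicitly as Laurent polynomials and checks that $(d_1,\dots,d_4)=(1,1,1,1)$ forces $(x_1,\dots,x_4)=(1,3,1,2)$, which plugged into Example \ref{ClustSix} reproduces the fragment; and \eqref{SixTwo} is $\ev(\be)$ with $\be=\mu_4\mu_2\mu_3\mu_4\mu_2(\bx)$, where $(e_1,\dots,e_4)=(1,1,1,1)$ forces $(x_1,\dots,x_4)=(1,2,3,5)$. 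You should carry out (or at least cite) these computations rather than leave them as an acknowledged obstacle. As a side remark, the paper also records an independent elementary check that \eqref{SixOne} is not unitary: its two consecutive columns are $(2,2,2,2)$, and mutating this evaluated seed in all directions only ever produces the value sets $\{2,2,2,2\}$ and $\{2,2,2,3\}$, never $\{1,1,1,1\}$; this gives a verification of the last claim that does not rely on having accounted for all $50$ clusters.
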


\begin{proof}
The formal frieze of width 2 is displayed in Example \ref{ClustSix}.
We construct the fragments of frieze \eqref{SixThree}-\eqref{SixTwo}
as image of the map $\ev$ (with the convention that the two first columns of the fragment contain the initial cluster variables
$(x_1,\ldots, x_{4})$).

The fragment of frieze \eqref{SixThree} is realized as $\ev(\bx)$ where 
$\bx=(x_1,\ldots, x_{4})$ is the initial cluster.

The fragment of frieze \eqref{SixFour}, is easily obtained as $\ev(\bc)$ where 
$\bc=\mu_2(\bx)$.

The fragment of frieze \eqref{SixFive} is realized as $\ev(\bd)$ where 
$\bd$ is obtained from the initial cluster by performing the mutations $\mu_2\mu_4$.
Indeed, one can check this sequence of mutations transforms the initial seed $(\bx,\Qc_2)$ into
$$
\xymatrix{&
&1\ar@{<-}[rd]
&2\ar@{->}[d]
\\
&
(d_1,d_2,d_3, d_{4})\;, &3\ar@{->}[r]
&4
}
$$
where
$$
\left\{
\begin{array}{rcl}
d_1&=&x_1,\\[10pt]
d_2&=&\dfrac{x_1+x_4}{x_2},\\[10pt]
d_3&=&x_3,\\[10pt]
d_4&=&\dfrac{x_1x_2+x_1x_3+x_3x_4}{x_2x_4}.
\end{array}
\right.
$$
One then sees
$$
(d_1,d_2,d_3, d_{4})=(1,1,1,1)\Longleftrightarrow (x_1,x_2,x_3, x_{4})=(1,3,1,2).
$$

The  fragment of frieze \eqref{SixTwo} is realized as $\ev(\be)$ where 
$\be$ is obtained from the initial cluster by applying the mutations 
$\mu_4\mu_2\mu_3\mu_4\mu_2$.
Indeed, one can check this sequence of mutations transforms the initial seed $(\bx,\Qc_2)$ into
$$
\xymatrix{&
&1\ar@{->}[rd]
&2\ar@{->}[d]
\\
&
(e_1,e_2,e_3, e_{4})\;, &3\ar@{->}[r]
&4
}
$$
where
$$
\left\{
\begin{array}{rcl}
e_1&=&x_1,\\[10pt]
e_2&=&\dfrac{x_2+x_3}{x_4},\\[10pt]
e_3&=&\dfrac{x_1x_2+x_1x_3+x_2x_4+x_3x_4}{x_2x_3x_4},\\[10pt]
e_4&=&\dfrac{x_1x_2+x_1x_3+x_2x_4}{x_3x_4}.
\end{array}
\right.
$$
One can then check that 
$$
(e_1,e_2,e_3, e_{4})=(1,1,1,1)\Longleftrightarrow (x_1,x_2,x_3, x_{4})=(1,2,3,5).
$$

By Lemma \ref{unitdihed}, one deduces that the 50 fragments produced by  \eqref{SixThree}-\eqref{SixTwo} under the action of the dihedral group are all realized as evaluation of a cluster at $(1,\cdots,1)$. Since in this case one has exactly 50 clusters, the remaining fragment \eqref{SixOne} can not be realized this way. 
This can also be checked by hand. Evaluating the initial cluster at $(2,2,2,2)$ and performing a sequence of mutations in all the possible directions will always lead to two different sets  $\{2,2,2,2\}$ and $\{2,2,2,3\}$ for the values of the cluster variables.
\end{proof}

\begin{rem}\label{width3}
For the 2-friezes of width $m=3$ the associated cluster algebra is of type $A_2\times A_3\simeq E_6$.
In this type there are 833 clusters, but we found already 868 friezes, so that we can expect at least 35 non-unitary friezes.
\end{rem}

\subsection{Proof of Theorem \ref{inffrieze}}\label{proofth1}
In this section we fix $m>4$.
We denote by $\cC$ the set of all clusters in the cluster algebra $\A_{\Qc_m}$.
It is known that $\cC$ is infinite.
We define the following relation on $\cC$:
$$
\bc\sim \bd \text{ if and only if } c_1=\cdots=c_{2m}=1 \text { implies } d_1=\cdots=d_{2m}=1,
$$
where the $c_i$ and $d_i$ are the variables in the clusters $\bc$ and $\bd$ respectively.

\begin{lem}\label{Pfequiv}
The relation $\sim$ is an equivalence relation on $\cC$.
\end{lem}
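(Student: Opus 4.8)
The claim is that the relation $\bc\sim\bd$ on the set $\cC$ of clusters, defined by "$\bc$ evaluates to $(1,\dots,1)$ implies $\bd$ evaluates to $(1,\dots,1)$", is an equivalence relation. The only content is showing that this implication-based relation is symmetric; reflexivity and transitivity are immediate from the shape of the definition (transitivity is just chaining implications). So the plan is: first dispatch reflexivity and transitivity in one line each, then spend the real effort on symmetry.

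For symmetry, suppose $\bc\sim\bd$. I want to show $\bd\sim\bc$, i.e. that if $d_1=\cdots=d_{2m}=1$ then $c_1=\cdots=c_{2m}=1$. The key tool is the evaluation map $\ev$ and, more precisely, the observation already used implicitly in Proposition \ref{unitfrieze}: the unitary frieze $\ev(\bc)$ depends only on the equivalence class of $\bc$, because by the positivity/subtraction-free nature of the exchange relations, setting all variables of one cluster to $1$ forces a well-defined positive-integer value on every other cluster variable, and in particular the fragment $F(\bx(\bc))|_{\bc=(1,\dots,1)}$ coincides with $F(\bx(\bd))|_{\bd=(1,\dots,1)}$ whenever $\bc\sim\bd$ — indeed whenever both $\bc$ and $\bd$ take value $1$ simultaneously. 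The plan is to argue that $\bc\sim\bd$ forces the associated unitary friezes to be literally the same infinite 2-frieze, and then to run the implication in reverse: since both $\bc$ and $\bd$ occur as cluster variables inside this common frieze (two consecutive columns, after the relevant mutations), evaluating that frieze's initial cluster at $1$ makes all its entries equal to $1$ in a consistent way, and in particular makes the variables of $\bc$ equal to $1$.

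Concretely, here is the cleanest route I would take. Fix $\bc,\bd\in\cC$ with $\bc\sim\bd$, and suppose $\bd=(1,\dots,1)$ is achievable, say at the point $\bx = \bx^{0}$ of the original torus $\C(x_1,\dots,x_{2m})$ where all $d_i=1$. Each $c_j$ is a Laurent polynomial in $\bd$ (Laurent phenomenon applied with $\bd$ as the cluster), and this Laurent polynomial is \emph{subtraction-free}, being obtained from $\bd$ by a sequence of exchange relations \eqref{exrel}; hence $c_j|_{\bd=(1,\dots,1)}$ is a well-defined \emph{positive integer}, call it $n_j$. Now I claim $n_j=1$ for all $j$. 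Consider the unitary frieze $\ev(\bd)$; by the argument in the proof of Proposition \ref{unitfrieze} extended to general width (setting an arbitrary cluster to $1$ yields a positive-integer frieze), the entries of this frieze are exactly the positive integers obtained by evaluating every cluster variable at $\bd=(1,\dots,1)$; in particular the two columns carrying the original variables $x_i$ carry the integers $x_i|_{\bd=(1,\dots,1)}$. But $\bc\sim\bd$ means precisely: if some specialization makes all $c_i=1$, it makes all $d_i=1$ — that is the \emph{given} direction, so I cannot use it directly. Instead I use that $\bc\sim\bd$ is equivalent, on the level of the common frieze, to "the frieze $\ev(\bc)$ equals the frieze $\ev(\bd)$", and these friezes carry \emph{all} cluster variables evaluated at the basepoint; running the implication the other way then amounts to the symmetry of "the point where all of $\bc$ is $1$" and "the point where all of $\bd$ is $1$" \emph{being the same point} of the base torus — which is what I must establish.

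**The main obstacle.** The genuine difficulty is exactly this last point: a priori $\bc\sim\bd$ only says the solution set $\{c_i=1\}$ is contained in $\{d_i=1\}$, and there is no formal reason a containment of solution sets should be symmetric. The resolution — and the crux of the proof — is that each of these solution sets is nonempty and in fact a \emph{single point} of the base torus $(\C^{*})^{2m}$ (or of the positive torus), because a cluster $(c_1,\dots,c_{2m})$ is a \emph{transcendence basis} / system of coordinates on the field $\C(x_1,\dots,x_{2m})$, so the equations $c_1=1,\dots,c_{2m}=1$ cut out exactly one point. Two single points, one contained in the other, must coincide; hence $\{c_i=1\}=\{d_i=1\}$ as subsets (singletons) of the torus, and the relation is visibly symmetric. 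So the proof I would write is: (1) reflexivity and transitivity are trivial; (2) for symmetry, note the locus $\{c_i=1\ \forall i\}$ consists of a single point of the torus because $\bc$ is a free generating set of the ambient field; (3) therefore $\bc\sim\bd$ says one singleton lies in another, forcing equality of loci, which gives $\bd\sim\bc$. I expect step (2) — phrasing "cluster = coordinate system, hence the unit-locus is a point" rigorously over $\C$ or over $\R_{>0}$ (where one should be a little careful: the real positive locus is what matters for the frieze application, and one invokes that the change of coordinates between any two clusters is a positive Laurent bijection of the positive torus) — to be the part needing the most care.
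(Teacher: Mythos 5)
Your final argument---that the transition maps between any two clusters are mutually inverse Laurent maps, so the locus where all variables of a cluster equal $1$ is a single point of the (positive) torus and a containment of singletons forces equality---is correct and is essentially the paper's own proof, which states it as: $P_{\bc,\bd}$ and $P_{\bd,\bc}$ are inverse bijections of the torus, hence $(1,\ldots,1)=P_{\bc,\bd}(1,\ldots,1)\Longleftrightarrow(1,\ldots,1)=P_{\bd,\bc}(1,\ldots,1)$. The lengthy detour through unitary friezes and $\ev$ is unnecessary (as you note yourself), and the singleton claim should be justified by the mutually inverse, subtraction-free Laurent transition maps rather than by $\bc$ being a transcendence basis alone, but the substance matches the paper.
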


\begin{proof}
It is not clear from the definition that $\sim$ is symmetric.
Given two clusters $\bc$ and $\bd$ there exist two  $2m$-tuples of Laurent polynomials in $2m$ variables
$P_{\bc,\bd}=(P_1,\cdots, P_{2m})$ and $P_{\bd,\bc}=(Q_1,\cdots, Q_{2m})$ such that 
$$
\bc=P_{\bc,\bd}(\bd)=(P_1(\bd),\cdots, P_{2m}(\bd))\; \text{ and } \;\bd=P_{\bd,\bc}(\bc)=(Q_1(\bc),\cdots, Q_{2m}(\bc)).
$$
The transition maps $P_{\bc,\bd}$ and $P_{\bd,\bc}$ define two bijections, inverse one to the other, from $(\C^*)^n$ to $(\C^*)^n$. Therefore,
$$
(1,\ldots,1)=P_{\bc,\bd}(1,\ldots,1)\Longleftrightarrow (1,\ldots,1)=P_{\bd,\bc}(1,\ldots,1).
$$
This proves the symmetry of the relation.
\end{proof}

We denote by $\cC/\!\sim$ the set of all equivalence classes, and we denote by $\bar{\bc}$ the equivalence 
class of an element $\bc \in \cC$.

\begin{lem}\label{bev}
The following map is well defined and is injective
$$
\begin{array}{cccl}
\bev: &\cC/\!\sim&\longrightarrow& \{\text{fragments of integral 2-friezes of width }m\}\\[6pt]
&\bar{\bc}&\mapsto& \ev(\bc)
\end{array}
$$
where $\ev$ is the function introduced in Section \ref{defev}.
\end{lem}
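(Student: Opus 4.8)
The plan is to prove two things: first that the map $\bev$ is well defined, i.e. that it does not depend on the choice of representative $\bc$ in an equivalence class $\bar\bc$; and second that it is injective, i.e. that if $\ev(\bc)=\ev(\bd)$ then $\bc\sim\bd$.

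For well-definedness, suppose $\bc\sim\bd$. I must show $\ev(\bc)=\ev(\bd)$. Recall from Section \ref{defev} that $\ev(\bc)$ is the fragment obtained by writing the formal frieze $F(\bx(\bc))$, whose entries are Laurent polynomials in $\bc$, and then specializing $\bc=(1,\dots,1)$; the fragment is laid out starting with the two columns carrying the values of $(x_1,x_2,\dots,x_{2m})$. The point is that the entries of the frieze are fixed elements of the cluster algebra $\A(\Qc_m)$ (namely the cluster variables appearing in the 2-frieze, cf. Remark \ref{bipart}), so the numerical value of each entry after specialization is the same whether we express that entry in the cluster $\bc$ and set $\bc=(1,\dots,1)$, or express it in $\bd$ and set $\bd=(1,\dots,1)$ — \emph{provided} the specialization point $\bc=(1,\dots,1)$ and the point $\bd=(1,\dots,1)$ correspond to the same point of the cluster torus. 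This is exactly what $\bc\sim\bd$ guarantees, together with Lemma \ref{Pfequiv}: the transition map $P_{\bc,\bd}$ is a bijection of tori sending $(1,\dots,1)$ to $(1,\dots,1)$, so evaluating any Laurent polynomial (in particular any frieze entry) at the $\bc$-point $(1,\dots,1)$ gives the same number as evaluating it at the $\bd$-point $(1,\dots,1)$. Hence all entries of $F(\bx(\bc))|_{\bc=(1,\dots,1)}$ and $F(\bx(\bd))|_{\bd=(1,\dots,1)}$ agree, and since $x_1$ always sits in position $v_{0,0}$ the two fragments are literally equal, not merely equal up to the dihedral action. This shows $\bev(\bar\bc)$ is well defined.

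For injectivity, suppose $\ev(\bc)=\ev(\bd)$ for clusters $\bc,\bd$. Both friezes have $x_1$ in position $v_{0,0}$, so equality of fragments means in particular that the two consecutive columns displayed in \eqref{ClustF} agree: the values of $(x_1,\dots,x_{2m})$ obtained from setting $\bc=(1,\dots,1)$ coincide with those obtained from setting $\bd=(1,\dots,1)$. Now the key observation is that for \emph{any} cluster, the $2m$ variables $x_1,\dots,x_{2m}$ of the initial cluster sit in two consecutive columns of the frieze $F(\bx(\bc))$, hence are particular frieze entries, hence are Laurent polynomials in $\bc$; and conversely the variables $c_i$ are recovered from the frieze entries (two consecutive columns of the frieze form a cluster of $\A(\Qc_m)$ by the theorem of \cite{MOT}, and the initial cluster $\bx$ is among the clusters reachable). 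Therefore $\bc=(1,\dots,1)$ implies $\bx=(1,\dots,1)$, and likewise $\bd=(1,\dots,1)$ implies $\bx=(1,\dots,1)$. But what I actually need is: $\bc=(1,\dots,1)\Rightarrow\bd=(1,\dots,1)$. Using the transition map, $\bd=P_{\bd,\bc}(\bc)$, so it suffices to see that $P_{\bd,\bc}(1,\dots,1)=(1,\dots,1)$. Since $\ev(\bc)=\ev(\bd)$ forces the $\bx$-values to coincide and both equal the common fragment's first two columns, and since $\bd$ is a Laurent polynomial vector in $\bx$ while $\bx$ specialized via $\bc=(1,\dots,1)$ equals $\bx$ specialized via $\bd=(1,\dots,1)$, one reads off $\bd|_{\bc=(1,\dots,1)}=\bd|_{\bd=(1,\dots,1)}=(1,\dots,1)$. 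Hence $\bc\sim\bd$, i.e. $\bar\bc=\bar\bd$, proving injectivity.

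I expect the main obstacle to be the bookkeeping in the injectivity step: one must be careful that equality of \emph{fragments} (and not merely equality up to the dihedral group action of the previous subsection) is what comes out of $\ev(\bc)=\ev(\bd)$, and that the chain "common fragment $\Rightarrow$ common values of $x_1,\dots,x_{2m}$ $\Rightarrow$ the vector $\bd$ expressed in $\bx$ takes value $(1,\dots,1)$ at that common point" is airtight. The well-definedness step is essentially a formal consequence of Lemma \ref{Pfequiv} and the fact that frieze entries are honest elements of $\A(\Qc_m)$; the only subtlety there is recording that the fragment layout (with $x_1$ pinned at $v_{0,0}$) makes $\ev$ produce a genuinely canonical fragment rather than an orbit.
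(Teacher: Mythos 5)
Your proposal is correct and follows essentially the same route as the paper: the paper proves $\ev(\bc)=\ev(\bd)\Longleftrightarrow\bc\sim\bd$ in one chain of equivalences by composing the transition maps $P_{\bx,\bc}$, $P_{\bc,\bx}$, $P_{\bx,\bd}$, $P_{\bc,\bd}$ and using that a fragment is determined by its first two columns, which is exactly the content of your two halves (well-definedness being the reverse implication, injectivity the forward one). Your extra care about the fragment being pinned with $x_1$ at $v_{0,0}$ rather than taken up to the dihedral action is a reasonable point the paper leaves implicit, but it does not change the argument.
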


\begin{proof}
We need to prove that $\ev(\bc)=\ev(\bd)$ if and only if $\bc\sim \bd$.
This can be done using as in the proof of Lemma \ref{Pfequiv}
the transition functions $P_{\bc,\bd}$,  $P_{\bc,\bx}$,  $P_{\bx,\bd}$ between the different clusters  and the reciprocals.
\begin{eqnarray*}
\ev(\bc)=\ev(\bd)
&\Longleftrightarrow&P_{\bx,\bc}(1,\ldots,1)=P_{\bx,\bd}(1,\ldots,1)\\
&\Longleftrightarrow&(1,\ldots,1)=P_{\bc,\bx}P_{\bx,\bd}(1,\ldots,1)\\
&\Longleftrightarrow&(1,\ldots,1)=P_{\bc,\bd}(1,\ldots,1)\\
&\Longleftrightarrow&\bc\sim\bd
\end{eqnarray*}
\end{proof}

To complete the proof of Theorem \ref{inffrieze}, we show that the set $\cC/\!\sim$ is infinite.

\begin{lem}\label{infclasse}
If $\bc\sim\bd$ then $\bc=(c_1,\ldots,c_{2m})$ is a permutation of $\bd=(d_1,\ldots,d_{2m})$.
\end{lem}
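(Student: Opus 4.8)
I want to show that if $\bc \sim \bd$ then the multiset of entries of $\bc$ equals the multiset of entries of $\bd$. The key input I will use is the \emph{positivity} statement (a theorem in our case since $\Qc_m$ is bipartite): every cluster variable of $\A(\Qc_m)$ is a Laurent polynomial with \emph{nonnegative} integer coefficients in the variables of any fixed cluster. First I would fix the cluster $\bd = (d_1, \ldots, d_{2m})$ and express each entry $c_j$ of $\bc$ as a Laurent polynomial $c_j = P_j(d_1, \ldots, d_{2m})$ with nonnegative integer coefficients. Since $\bc \sim \bd$, specializing all $d_i = 1$ forces all $c_j = 1$; that is, $P_j(1,\ldots,1) = 1$ for every $j$.

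**The main step.** The heart of the argument is to understand which Laurent monomials can occur in $P_j$. The exchange relations \eqref{exrel} are subtraction-free, so each $P_j$ is a subtraction-free Laurent expression; combined with positivity, $P_j$ is a genuine nonnegative-integer-coefficient Laurent polynomial whose value at $\mathbf 1$ is $1$. I would then invoke the structure of cluster variables: each cluster variable $c_j$ has a well-defined \emph{$\mathbf{d}$-vector} $\mathbf d(c_j) \in \Z^{2m}$, i.e. the denominator of $c_j$ as a reduced fraction in the $d_i$ is the monomial $\prod d_i^{[\mathbf d(c_j)]_i}$ (with the standard convention), and the numerator is a polynomial with constant term equal to the denominator monomial's "complementary" behaviour — more precisely, the fact that $c_j$ specializes to $1$ at $\mathbf d = \mathbf 1$ together with positivity forces the numerator polynomial of $c_j$ to have \emph{exactly one} monomial, namely the one cancelling the denominator. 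In other words $c_j$ is itself a single Laurent monomial in $d_1,\ldots,d_{2m}$: write $c_j = \prod_i d_i^{a_{ij}}$ with $a_{ij}\in\Z$. Indeed, a nonnegative-coefficient Laurent polynomial taking the value $1$ at $(1,\ldots,1)$ must be a single monomial with coefficient $1$, since the sum of its (positive) coefficients is $1$.

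**From monomials to a permutation.** Now the same reasoning applied symmetrically (using $\bc \sim \bd$ and the symmetry of $\sim$ from Lemma \ref{Pfequiv}) shows each $d_i$ is a Laurent monomial in the $c_j$. Hence the integer matrix $A = (a_{ij})$ satisfies: $A$ and its inverse are both integer matrices, so $A \in \mathrm{GL}_{2m}(\Z)$, and moreover — because each $c_j$ and each $d_i$ is a \emph{cluster variable}, not merely an invertible Laurent monomial — the only monomials in the $d_i$ that are themselves cluster variables are the $d_i$ themselves (a cluster variable that is a Laurent monomial in a cluster must be one of that cluster's variables; otherwise $\A(\Qc_m)$ would contain a nontrivial monomial relation among cluster variables, contradicting that distinct cluster variables are algebraically independent within any single cluster's Laurent ring only up to the obvious ones — here I use that no cluster variable equals a product of others, which follows from examining $\mathbf d$-vectors or from the fact that a cluster variable lies in a cluster containing it, where it is one of the free generators). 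Therefore each $c_j$ equals some $d_{\pi(j)}$, and the map $j \mapsto \pi(j)$ is a bijection because $\bc$ and $\bd$ each consist of $2m$ distinct cluster variables. This gives the desired permutation.

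**Main obstacle.** The delicate point is the claim that a cluster variable which happens to be a Laurent \emph{monomial} in some cluster must actually be one of that cluster's variables. I expect to justify this via $\mathbf d$-vectors: if $c_j = \prod d_i^{a_{ij}}$ were a cluster variable, its $\mathbf d$-vector with respect to $\bd$ would be $(\max(-a_{ij},0))_i \geq 0$ coordinatewise, and the known sign-coherence / positivity of $\mathbf d$-vectors for bipartite (hence acyclic) cluster algebras forces the $a_{ij}$ to be of a single sign, after which the monomial being a cluster variable with a positive $\mathbf d$-vector pins it to a single initial variable. The alternative, cleaner route I would actually try first is: since $c_j = 1$ whenever $\bd = \mathbf 1$ and positivity gives $c_j$ as a nonnegative Laurent polynomial with $c_j|_{\mathbf 1} = 1$, the polynomial is a single monomial of coefficient $1$ — and a single Laurent monomial that is a cluster variable in a cluster algebra is necessarily one of the initial variables of that cluster, because otherwise it would be a unit in $\A(\Qc_m)$, but $\A(\Qc_m)$ has no non-constant units (the only units are $\C^*$, as the cluster algebra is not a Laurent polynomial ring but a genuine polynomial-type subalgebra with trivial coefficient group here). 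Nailing down "no non-constant units" rigorously is the one spot where I would need to be careful, citing \cite{GSV} or the acyclic case.
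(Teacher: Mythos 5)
Your first step is exactly the paper's: by Nakajima's positivity for the bipartite quiver $\Qc_m$, each $c_j$ is a Laurent polynomial in $\bd$ with nonnegative integer coefficients, and since $\bc\sim\bd$ forces its value at $(1,\dots,1)$ to be $1$, the sum of the coefficients is $1$ and $c_j$ is a single Laurent monomial with coefficient $1$. Up to there the proposal is correct and matches the paper.

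The finishing step is where you have a genuine gap. Knowing only that the exponent matrix lies in $\mathrm{GL}_{2m}(\Z)$ does not give a permutation (negative exponents are not yet excluded), and the two justifications you offer for ``a cluster variable that is a Laurent monomial in a cluster must be one of that cluster's variables'' do not hold up as stated. The unit argument is the more serious problem: a Laurent monomial $\prod_i d_i^{a_{ij}}$ is a unit in the Laurent ring $\C[d_1^{\pm1},\dots,d_{2m}^{\pm1}]$, but there is no reason its inverse lies in the subalgebra $\A(\Qc_m)$ unless all $a_{ij}\le 0$; in particular the argument says nothing about a genuine product such as $d_1d_2$, and even in the all-nonpositive case the assertion that $\A(\Qc_m)$ has only constant units is itself a nontrivial claim you leave unproved. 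The $\mathbf d$-vector route would require denominator positivity for non-initial cluster variables, a substantial theorem that you invoke only loosely. The paper closes this gap with a much lighter observation: if $c_i=\prod_l d_l^{k_l}$ had some $k_j<0$, then rewriting $d_j=(M_++M_-)/d_j'$ in the adjacent cluster $\mu_j(\bd)$ would put the non-monomial $(M_++M_-)^{-k_j}$ in the denominator, so $c_i$ would fail to be a Laurent polynomial in $\mu_j(\bd)$, contradicting the Laurent phenomenon. Hence all exponents are nonnegative; by the symmetry of $\sim$ the same holds in the other direction, and two nonnegative integer exponent matrices whose product is the identity are permutation matrices, which is the desired conclusion. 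I recommend replacing your unit/$\mathbf d$-vector discussion by this mutation argument.
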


\begin{proof}
Given two clusters $\bc=(c_1,\ldots,c_{2m})$ and $\bd=(d_1,\ldots,d_{2m})$, 
one can express every variable in one of the clusters as Laurent polynomial with positive integer coefficients in the variables in the other cluster (thanks to Nakajima's results \cite{Nak}).
The equivalence $\bc\sim \bd$ implies that  the expressions are actually unitary Laurent monomials.
Write for instance $c_i=\Pi_{1\leq j\leq 2m}d_j^{k_j}$ with $k_j\in \Z$.
 If the exponent $k_{j}$ is negative, then the expansion of $c_i$ in the variables of $\mu_j(\bd)$ is not a Laurent polynomial. 
Therefore, one deduces that the expressions of the $c_i$'s are just monomials (not Laurent) in the variables of $\bd$, and by symmetry, the $d_i$'s are also monomials in the variables of $\bc$.
This happens if and only if the set of variables in $\bc$ is the same as the set of variables in~$\bd$.
\end{proof}

By Lemma \ref{infclasse}, there is a finite number of clusters in a given equivalence class $\bar{\bc}$
of $\cC/\sim$. 
Since $\cC$ is infinite, one deduces there are infinitely many classes in $\cC/\sim$.
The injective map $\bev$ in Lemma~\ref{bev} produces infinitely many integral 2-friezes. 

Theorem \ref{inffrieze} is proved. 

\section{Cutting and gluing $2$-friezes}\label{OpSect}

\subsection{The algebraic operations}

The first operation that we describe on the $2$-friezes, is equivalent to the connected sum defined in \cite{MOT}. 
It produces a new integral closed 2-frieze starting from two smaller 2-friezes.

\begin{thm}
\label{GluThm}
Given two integral closed 2-friezes, of width $m$ and $\ell$, 
the following gluing of two columns on the top of the other over the pair $1\;1$ 
\begin{equation}\label{glucol}
\begin{array}{rccccl}
\cdots&1 & 1 & 1 &1&\cdots\\
 &&\circ&\circ&&\\
& &\vdots  & \vdots & &\\
& &\vdots  & \vdots & &\\
& & \circ & \circ & &\\
\cdots&1 & \mathbf{1} & \mathbf{1} &1& \cdots
\end{array},
\begin{array}{rccccl}
\cdots&1 & \mathbf{1} & \mathbf{1} &1&\cdots\\
 &&\bullet &\bullet&&\\
& &\vdots  & \vdots & &\\
& & \bullet & \bullet& &\\
\cdots&1 & 1 & 1 &1& \cdots
\end{array}
\qquad\longmapsto
\begin{array}{rccccl}
\cdots&1 & 1 & 1 &1&\cdots\\
 &&\circ&\circ&&\\
& &\vdots  & \vdots & &\\
& &\vdots  & \vdots & &\\
& & \circ & \circ & &\\
&  & \mathbf{1} & \mathbf{1} &&\\
 &&\bullet &\bullet&&\\
& &\vdots  & \vdots & &\\
& & \bullet & \bullet& &\\
\cdots&1 & 1 & 1 &1& \cdots
\end{array}
\end{equation}
leads to a new integral closed 2-frieze of width $m+\ell+1$.
\end{thm}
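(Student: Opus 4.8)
The plan is to recognize the operation \eqref{glucol} as (a frieze incarnation of) the connected sum of polygons from \cite{MOT}, so that the output is automatically a closed $2$-frieze of the right width, and then to upgrade this to an \emph{integral} statement by gluing the corresponding polygons over $\Z$.

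Concretely, set $n_G=m+4$, $n_H=\ell+4$ and $w=m+\ell+1$, so that $n:=w+4=n_G+n_H-3$. I would first check, \emph{without} worrying about integrality, that the glued array is a closed $2$-frieze of width $w$. Recall from Section \ref{defriz} that a closed $2$-frieze is determined by two consecutive columns: writing the body entries of three consecutive columns as $(b_k),(c_k),(d_k)$, $1\le k\le w$, with the boundary convention $c_0=c_{w+1}=1$, the rule \eqref{friezerule} reads $b_kd_k-c_{k-1}c_{k+1}=c_k$, i.e.
$$
d_k=\frac{c_k+c_{k-1}c_{k+1}}{b_k},
$$
and symmetrically to the left. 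Take the two central columns of the glued array — of the form $(1,g,1,h,1)^{\!\top}$ and $(1,g',1,h',1)^{\!\top}$, where $g,g'$ and $h,h'$ are body columns of $G$ and of $H$ — and extend by this rule. The formula for $d_k$ is subtraction-free, so a simultaneous induction on the columns shows that every column has positive entries (whence no denominator ever vanishes and the procedure is well defined), and the rule \eqref{friezerule} holds at every interior position by the very definition of the extension. Thus the array is a genuine closed $2$-frieze of width $w=m+\ell+1$, which is $2n$-periodic by Proposition \ref{perio} and whose two central columns are those displayed in \eqref{glucol}. (This is exactly the connected sum of \cite{MOT} read on friezes.)

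It remains to prove integrality, which is the real content. Here I would pass to the polygon picture of \cite{MOT}: a closed $2$-frieze of width $w$ with $3\nmid n$ corresponds to a sequence $(U_k)_{k\in\Z}$ in $\C^3$ with $\det(U_k,U_{k+1},U_{k+2})=1$ and $U_{k+n}=U_k$, whose $3\times3$ minors of consecutive vectors are the frieze entries, and which is reconstructed from any two adjacent columns. An integral frieze lifts to $(U_k)\in\Z^3$: normalize $U_1,U_2,U_3$ to the standard frame, then iterate $U_{k+3}=a_kU_{k+2}-b_kU_{k+1}+U_k$, whose coefficients are (up to sign) entries of the first two rows of the frieze, hence integers; the same recursion, solved for $U_k$, propagates this to all $k$. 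Choosing integral lifts $(V_i)$ for $G$ and $(\widetilde W_j)$ for $H$, the two selected columns of each frieze single out a block of consecutive vectors carrying a unimodular frame; pick $A\in\SL_3(\Z)$ matching the frame of $H$ to that of $G$ along these blocks (possible since both frames are $\Z$-bases of determinant $1$), and splice $(V_i)$ with $(A\widetilde W_j)$ along the common frame. The resulting sequence is $n$-periodic with $n=n_G+n_H-3$ and satisfies $\det(U_k,U_{k+1},U_{k+2})=1$ everywhere — inside each block by construction, at the two junctions because three consecutive vectors are shared. Its frieze is a closed $2$-frieze of width $n-4=m+\ell+1$ with integer entries (minors of vectors in $\Z^3$) and with the prescribed two central columns; by the uniqueness above it coincides with the array built in the previous step, which is therefore integral.

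The step I expect to be the main obstacle is the bookkeeping in the splicing: deciding precisely which three consecutive vectors of $G$'s polygon are identified with which three of $H$'s so that simultaneously the determinant conditions hold at \emph{both} seams, the new period is $n_G+n_H-3$ (the source of the single inserted row $\mathbf1\ \mathbf1$ and of the width $m+\ell+1$), and the two seam columns of the glued frieze are exactly those in \eqref{glucol}. Keeping track of the half-integer indexing of $2$-friezes, and handling the excluded case $3\mid n$ (where one works instead with a monodromy $U_{k+n}=\omega U_k$, which affects no minor), are routine but must be done with care.
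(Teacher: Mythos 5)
Your global strategy is the same as the paper's: reduce to a splicing of the two associated polygons, get positivity of the glued frieze from the subtraction-free local rule applied to two consecutive positive columns, and get integrality from the fact that the recurrence \eqref{Recur} has integer coefficients, so every vertex is an integer combination of a fixed unimodular frame and all the determinantal entries of Proposition \ref{Entry} are integers. The problem is the one step you set aside as ``routine bookkeeping'': that is exactly where the content of the proof lies, and your argument for it is wrong as written. You assert that the spliced sequence satisfies $\det=1$ ``at the two junctions because three consecutive vectors are shared.'' Identifying a consecutive unimodular triple of one polygon with a consecutive unimodular triple of the other via a rigid $A\in\SL(3,\Z)$ only takes care of the determinant windows that contain the shared triple; the windows straddling the \emph{opposite} seam --- in the paper's notation $(W_{n-1},W_n,W_{n+1})=(U_{n-1},V_3,V_4)$ --- mix vectors that are consecutive in neither polygon, and their determinant is not $1$ in general. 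Since $A$ is completely pinned down by the matched frames, your construction has no parameter left to repair this. The paper's fix is to introduce one extra unipotent degree of freedom before matching: it replaces $U_1$ by $U'_1=U_1+|V_1,V_3,V_4|\,U_2$ (a shear that changes no determinant internal to $U$ and preserves integrality, since $|V_1,V_3,V_4|$ is an entry of the second frieze), identifies $V_1=U'_1$, $V_2=U_2$, $V_3=U_n$, and then checks by a short computation that the single non-automatic determinant $|U_{n-1},V_3,V_4|$ equals $1$. Without this correction and its verification the spliced sequence is not a polygon, and the integrality transfer never gets started.

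Two smaller points. The caveat about $3\mid n$ is not needed: Proposition \ref{Isop} deals with genuinely $n$-periodic sequences in $\C^3$, and the divisibility condition only concerns the descent to the projective plane. Also, to close your argument you must still verify that the frieze of the spliced polygon has the two displayed columns of \eqref{glucol} as a pair of consecutive columns (the paper records explicitly which entries $w_{i,j}$ the glued pair $1\;1$ becomes); only then can you invoke the uniqueness of Proposition \ref{formal} to identify it with the array produced by your column-propagation step.
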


The second operation breaks a 2-frieze into a smaller 2-frieze.
\begin{thm}
\label{CutThm}
Cutting above a pair $x,y$ in an integral 2-frieze
\begin{equation}
\label{BigPat}
\begin{array}{rccl}
\cdots & 1 & 1 & \cdots\\
 &\vdots &\vdots&\\
& x & y &\\
 & u & v &\\
&\vdots&\vdots&\\
\cdots & 1 & 1 & \cdots
\end{array}
\longmapsto
\begin{array}{rccl}
\cdots & 1 & 1 & \cdots\\
 &\vdots&\vdots &\\
& x & y &\\
\cdots & 1 & 1 & \cdots
\end{array}
\end{equation}
gives a new integral 2-frieze if and only if
\begin{equation}
\label{Cond}
u\equiv 1 \mod y,
\qquad
v\equiv 1\mod x.
\qquad
\end{equation}
\end{thm}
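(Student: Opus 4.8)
The plan is to analyze how the entries of the 2-frieze are determined by two consecutive columns, and to track what happens to integrality when we replace the pair-of-columns data sitting below the row $x\;y$ by the terminating row of $1$'s. First I would fix notation: let the frieze be $F$ with the two consecutive columns shown in \eqref{BigPat}, so that the full pattern below the displayed rows is determined by the entries $x,y$ together with the entries $u,v$ immediately below them (recall from Proposition \ref{formal} that a closed 2-frieze is generically determined by two consecutive columns, and here the relevant two columns are the ones containing $x,u,\ldots$ and $y,v,\ldots$). The new array $F'$ obtained by cutting is, by definition, the closed 2-frieze of smaller width whose two consecutive columns have $x,y$ as their last nontrivial entries and which terminates in a row of $1$'s directly below. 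So the content of the theorem is: $F'$ has positive integer entries if and only if \eqref{Cond} holds. Positivity of all entries is automatic once integrality holds (the same argument as in Section \ref{defev}: the frieze rule \eqref{friezerule} is subtraction-free in the sense that each new entry, computed from the boundary $1$'s upward, is forced to be positive once it is an integer — more carefully, positivity propagates from the row of $1$'s because the bordered entries are ratios of already-positive quantities). So the real issue is integrality.

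The key computation is to use the frieze rule \eqref{friezerule} near the new bottom boundary of $F'$. In $F'$ the row directly below $x\;y$ consists of $1$'s, and the row below that consists of $1$'s as well (that is the terminating boundary), so reading \eqref{friezerule} off the corner near $x$ and near $y$ expresses the entry of $F'$ sitting \emph{two rows below} $x\;y$ — call the relevant entries in $F'$ that play the role of $u,v$ by $u',v'$ — as explicit small expressions: the boundary conditions force $u'$ and $v'$ to be determined by $x$ and $y$ alone, and a direct application of \eqref{friezerule} at the two lattice points adjacent to the corner gives $u' = $ (something congruent to $1$ mod $y$) and $v'=$ (something congruent to $1$ mod $x$); in fact one gets that the entry of $F'$ below $x$ is $\equiv 1 \bmod y$ and the entry below $y$ is $\equiv 1 \bmod x$ automatically, while conversely, once these two congruences hold for the original $u,v$, the \emph{entire} lower part of $F$ below the cut is forced to coincide, row by row, with the lower part of $F'$. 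The mechanism is: given the row $x\;y$ and the congruence conditions, the frieze rule determines the next row down uniquely as a row of positive integers, and then induction down the strip — using that each subsequent row is again determined by the frieze rule from the two rows above it and that the relation \eqref{friezerule} solved for a "southern" neighbor reads $v_{i+\half,j-\half} = (v_{i,j-1}v_{i+1,j} - v_{i+\half,j+\half})/v_{i,j}$, which is an integer precisely when the numerator is divisible by $v_{i,j}$ — shows the cut array closes up into a row of $1$'s exactly when \eqref{Cond} is satisfied.

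Concretely the steps I would carry out are: (1) set up coordinates so that $x = v_{i,j}$, $y=v_{i+\half,j+\half}$ (the two consecutive columns of the big frieze), $u = v_{i+\half,j-\half}$, $v=v_{i+1,j}$, with the row of $1$'s of $F'$ directly below the $x,y$-row, i.e. $v'_{i+\half,j-\half}=v'_{i+1,j}=1$ in $F'$; (2) write out \eqref{friezerule} at the boundary of $F'$ to get closed-form expressions for the first two rows of $F'$ below $x\;y$ in terms of $x,y$, and observe the mod conditions appear as exactly the integrality constraint for these entries; (3) prove by downward induction that if \eqref{Cond} holds then $F$ and $F'$ agree below the $x\;y$-row, hence $F'$ is a genuine closed integral 2-frieze; (4) prove the converse: if $F'$ is integral, then the explicit formula for its entry below $x$ (resp. below $y$) forces $u\equiv 1\bmod y$ (resp. $v\equiv 1 \bmod x$) — here one must check that the integrality of \emph{that one entry} of $F'$ already pins down $u,v \bmod y, x$, which follows because the top two rows $x\;y$ and the terminal $1$'s determine everything. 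I expect the main obstacle to be step (3)/(4): one must argue carefully that $F$ and $F'$ really share their lower half whenever the congruences hold — i.e. that the congruence condition on the \emph{single} pair $(u,v)$ is enough to guarantee integrality of \emph{all} the (infinitely many, but $2n$-periodic by Proposition \ref{perio}) entries further down, rather than merely the first new row. The cleanest route is probably to show the first new row below $x\;y$ in $F'$ is integral $\iff$ \eqref{Cond}, and then invoke that in any closed 2-frieze, once one row of positive integers sits below the appropriate boundary data, integrality of the rest is forced by the local rule read downward (with positivity handled as above) — reducing everything to one explicit congruence check at the boundary.
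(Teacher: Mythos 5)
There is a genuine gap, and it comes from misreading what the cut actually produces. The cut frieze $F'$ consists of the two displayed columns truncated at the row $x\;y$, with the single boundary row of $1$'s immediately below; it has \emph{no} rows below the cut, so there are no entries $u',v'$ of $F'$ ``playing the role of $u,v$'' (the entries directly below $x$ and $y$ in $F'$ are literally $1$), there is no second row of $1$'s, and the statement that ``$F$ and $F'$ agree below the $x\;y$-row'' is empty. What actually has to be controlled is the integrality of the entries of $F'$ in the \emph{other} columns, which are regenerated from the two retained columns via \eqref{friezerule} in the form $E=(C+NS)/W$ and which therefore differ from the corresponding entries of $F$: for instance, if $s$ denotes the entry above $y$, the entry of $F'$ just to the right of $y$ is $(y+s)/x$, whereas in $F$ it is $(y+sv)/x$. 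This is the only place where the hypothesis on $u,v$ --- entries of $F$ that do not even occur in $F'$ --- can enter, and your plan never identifies that mechanism. Moreover, the principle you invoke to conclude, namely that integrality of one new row or column propagates to the rest by the local rule, is false: every propagation step divides by a previously computed entry, so integrality must be argued for all regenerated entries. Even the first step only gives $x\mid s(v-1)$ (from $x\mid y+sv$), which is strictly weaker than $v\equiv 1\bmod x$ unless $\gcd(s,x)=1$; hence the converse direction, as sketched, also fails.

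The paper's proof sidesteps all of this by passing to the polygon model: by Propositions \ref{Isop} and \ref{Entry} every entry is a determinant of vertices of an $n$-gon $(V_i)$, cutting amounts to replacing the block of vertices strictly between $V_i$ and $V_{j-3}$ by a single new vertex $W=aV_i-bV_{i-1}+cV_{i-2}$, and the unit-determinant normalizations \eqref{Normal} together with the recurrence \eqref{Recur} force $c=1$ and $b=b_{i+1}+(1-v)/x$ (and symmetrically for $a$ and $u,y$), so integrality of the new frieze is exactly integrality of $a,b,c$, i.e.\ condition \eqref{Cond}. A purely combinatorial proof along your lines would need a substitute for this: precise bookkeeping showing that every regenerated entry of $F'$ differs from the corresponding entry of $F$ by integer multiples of $(v-1)/x$ and $(u-1)/y$, plus, for the converse, specific entries of $F'$ whose integrality pins down $v\bmod x$ and $u\bmod y$ exactly. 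Neither is supplied by the proposal.
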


The last operation glues 2-friezes in a more general way than the operation of Theorem \ref{GluThm}.
\begin{thm}
\label{Gluxy}
Gluing two integral friezes, of width $m$ and $\ell$,  over a pair $x,y$, as follows, 
\begin{equation}
\label{glufrixy}
\begin{array}{rccl}
\cdots & 1 & 1 & \cdots\\
 &\vdots &\vdots&\\
& r& s &\\
 & x & y &\\
\cdots & 1 & 1 & \cdots
\end{array},
\begin{array}{rccl}
\cdots & 1 & 1 & \cdots\\
& x & y &\\
 & u & v &\\
&\vdots&\vdots&\\
\cdots & 1 & 1 & \cdots
\end{array}
\longmapsto
\begin{array}{rccl}
\cdots & 1 & 1 & \cdots\\
 &\vdots &\vdots&\\
& r& s &\\
 & x & y &\\
 & u & v &\\
&\vdots&\vdots&\\
\cdots & 1 & 1 & \cdots
\end{array}
\end{equation}
gives a new 2-frieze of width $m+\ell-1$ if and only if
\begin{equation}
\label{Cond2}
u\equiv r\equiv 1 \mod y,
\qquad
v\equiv s\equiv 1\mod x.
\qquad
\end{equation}
\end{thm}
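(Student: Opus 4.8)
The plan is to derive this from Theorem \ref{CutThm} together with Theorem \ref{GluThm}, or more directly to run the same local-rule bookkeeping that underlies Theorem \ref{CutThm} in both directions. First I would observe that the gluing of \eqref{glufrixy} is closed above and below by rows of ones by construction, so the only thing to check is that the local rule \eqref{friezerule} holds at every entry, that all entries are positive integers, and conversely that failure of \eqref{Cond2} forces a non-integral entry. The rule is automatically satisfied at every entry lying strictly inside the top block (those four neighbors all come from the first frieze) and strictly inside the bottom block (all four neighbors from the second frieze); the rule at entries adjacent to the two rows of ones is inherited as well. So the entire verification localizes to the two rows straddling the seam: the row containing $x,y$ and its immediate neighbors in both directions.

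Next I would set up coordinates around the seam. Just above the pair $x,y$ the first frieze produces a pair $r,s$ with $xy - rs$ (in the appropriate neighbor pattern) determined by the local rule; just below, the second frieze produces $u,v$ with $xy - uv$ similarly determined. When the two blocks are stacked, the entries $x$ and $y$ each acquire a \emph{new} pair of vertical neighbors — $r,s$ from above and $u,v$ from below — so the local rule at $x$ and at $y$ now reads as a new determinant condition mixing $r,s,u,v$. I would write these two equations out explicitly and show, by the same elimination that proves Theorem \ref{CutThm}, that they hold precisely when the congruences \eqref{Cond2} are met; the point is that in an integral 2-frieze the entry immediately below a pair $p,q$ is $\equiv 1$ modulo $p$ and modulo $q$ respectively (this is exactly condition \eqref{Cond} read off the rule $v_{i-1,j}v_{i,j+1} - v_{i,j}v_{i-1,j+1} = v_{i-\half,j+\half}$ with the bounding $1$'s), so the seam entries produced after gluing are integers iff $u\equiv r\equiv 1 \pmod y$ and $v\equiv s\equiv 1 \pmod x$. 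Positivity of all seam entries is then automatic from \eqref{friezerule} since every new entry is a difference of products that the rule forces to be positive once it is an integer, exactly as in the $x_i=1$ evaluation argument of Section \ref{defev}.

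Finally I would check the width count: the top frieze contributes $m$ interior rows, the bottom contributes $\ell$, and the shared row $x,y$ is counted once, giving $m+\ell-1$ interior rows between the outer ones. The converse direction — that violating \eqref{Cond2} breaks integrality — follows because the seam local-rule equations express the new entries as $(\,\cdot\,)/y$ and $(\,\cdot\,)/x$ type expressions whose numerators reduce to $u-r$, $v-s$, $u-1$, etc., modulo $x$ or $y$, so a non-unit residue produces a non-integer. The main obstacle I anticipate is purely notational: keeping the half-integer indexing of \eqref{friezerule} straight so that the four relevant local relations at the seam are written with the correct neighbors and signs; once the two equations at $x$ and at $y$ are correctly displayed, the number theory is the same one-line congruence argument as in Theorem \ref{CutThm}, and indeed Theorem \ref{Gluxy} can be seen as "Theorem \ref{CutThm} applied twice" — cutting the glued frieze above $x,y$ must return the bottom piece and below $x,y$ must return the top piece, and \eqref{Cond} for each of those cuts is exactly \eqref{Cond2}.
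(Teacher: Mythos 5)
There is a genuine gap, and it is the central one. You treat the glued object as if it were the row-by-row superposition of the two complete friezes, so that every local relation away from the seam is ``inherited''; but the gluing in \eqref{glufrixy} only prescribes \emph{two consecutive columns} of the new frieze. The two input friezes are $2(m+4)$- and $2(\ell+4)$-periodic while the output must be $2(m+\ell+3)$-periodic, so the full rows of the inputs cannot survive: as the paper's picture \eqref{frag} already shows for the simpler gluing, only triangular fragments of the old friezes reappear, and everything outside those fragments consists of genuinely new entries generated by propagating the local rule $D=(E+BC)/A$ outward from the two glued columns. Consequently the verification does \emph{not} localize to the seam. Your congruence computation at $x,y$ only controls the entries in the columns immediately adjacent to the glued ones; for each further column you must divide again, by entries that are themselves new, and nothing in your argument shows those successive quotients are integers (closedness of the propagated array --- that it is again bounded by rows of $1$'s at the correct width $m+\ell-1$ --- is likewise assumed rather than proved). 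This is exactly the difficulty the paper's proof is built to handle: it forms the glued $(m+\ell+3)$-gon $(W_i)$ from the two polygons $(U_i)$ and $(V_i)$, with the correction $U'_1=U_1+\a U_2$ where $u=1+\a y$ (this is where \eqref{Cond2} enters), verifies the single determinant $|W_{n-2},W_{n-1},W_{n}|=1$, and then obtains integrality of \emph{all} entries at once because every $W_i$ is an integer linear combination of three fixed vertices and every entry is a $3\times3$ determinant (Proposition \ref{Entry}); closedness is automatic from Proposition \ref{Isop}.

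Your reduction of the ``only if'' direction to Theorem \ref{CutThm} is fine (that is also what the paper does), and the width count and the positivity-from-two-positive-consecutive-columns remark are correct. But ``Theorem \ref{CutThm} applied twice'' cannot give sufficiency: Theorem \ref{CutThm} starts from an already existing integral frieze and cuts it, whereas here the existence of the big integral closed frieze is precisely what has to be established. To repair the argument you would need either the geometric construction above or an inductive integrality statement covering every propagated column, which the seam congruences alone do not provide.
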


Let us mention that conditions \eqref{Cond} and \eqref{Cond2} hold true for 
$(x,y)=(1,1), (2,1)$ and $(1,2)$ independently of the values of $u,v$, because of the local rule. 
This allows us to cut or glue a frieze whenever one of these pairs appears in the pattern.

\subsection{Entries in the 2-friezes}
Recall that the entries of a 2-frieze
are given by two sequences
$(v_{i,j})_{(i,j)\in \Z^2}$
and $(v_{i+\half,j+\half})_{(i,j)\in \Z^2}$ (see Figure \ref{vij} in Section \ref{defriz}).
In the geometric situation it will be useful to complete the closed 2-friezes by two rows of 0's 
above and under the frieze.

$$
 \begin{matrix}
 \cdots
&0& 0&0&0&\cdots
 \\
\cdots&0& 0&0&0&\cdots
 \\
\cdots& 1&1&1&1&\cdots
 \\
\ldots&v_{i-\half,i-\half}&v_{i,i}&v_{i+\half,i+\half}& v_{i+1,i+1}&\ldots
 \\[4pt]
 &v_{i,i-1}&v_{i+\half,i-\half}&v_{i+1,i}&v_{i+\thalf,i+\half}&\\
&\vdots &\vdots &\vdots &\vdots &&\\
 \cdots
& 1&1&1&1&\cdots \\
\cdots& 0&0&0&0&\cdots
 \\
\cdots& 0&0&0&0&\cdots
\end{matrix}
$$

\subsection{2-friezes and moduli space of polygons}

The space of all 2-friezes (with complex coefficients) is an interesting algebraic variety
closely related to the famous moduli space $\cM_{0,n}$ of genus-zero algebraic curves
with $n$ marked points.
This geometric interpretation will be useful in the sequel.

We call \textit{$n$-gon} a cyclically ordered $n$-tuple of points
$$
V_1,\ldots,V_n\in\C^3,
$$
i.e., we assume $V_{i+n}=V_i$, such that any three consecutive points obey the
normalization condition
\begin{equation}
\label{Normal}
\det(V_{i}, V_{i+1}, V_{i+2})=1.
\end{equation}
Following \cite{MOT}, we consider space of $n$-gons in $\C^3$ modulo the action of
the Lie group $\SL(3,\C)$, via
$$
\cC_n:=\lbrace n\hbox{-gons}\rbrace/\SL_3(3,\C).
$$
This space has a natural structure of algebraic variety.

\begin{prop}\cite{MOT} 
\label{Isop}
The set of closed $2$-friezes over $\C$ of width $m$ is in bijection with $\cC_{m-4}$.
\end{prop}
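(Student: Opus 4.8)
The plan is to construct an explicit map from $n$-gons (with $n=m-4$, $n$ not a multiple of $3$) to closed $2$-friezes of width $m$, and show it descends to a bijection on $\SL_3(\C)$-orbits. First I would recall from \cite{MOT} the basic dictionary: given an $n$-gon $(V_i)_{i\in\Z}$ in $\C^3$ satisfying the normalization \eqref{Normal}, one defines numbers indexed by pairs of vertices, namely the $2\times 2$ and $3\times 3$ ``Wronskian-type'' minors built from consecutive $V_i$'s. Concretely, set $v_{i,j}$ on the integer sublattice to be a $3\times 3$ determinant of the form $\det(V_a,V_b,V_c)$ for an appropriate spacing depending on $i-j$, and on the half-integer sublattice to be a suitable $2\times 2$ wedge-type quantity $\langle V_a\wedge V_b, \text{(dual vector)}\rangle$; the off-diagonal indexing $-1\le i-j\le m$ exactly records how far apart the vertices are, with $i-j=-1$ and $i-j=m$ forcing the boundary rows of $1$'s precisely because three \emph{consecutive} vertices have determinant $1$ (and an analogous degenerate identity at the other edge). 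The key algebraic input is that these minors satisfy the three-term Plücker / Dodgson-condensation relation, which is exactly the local rule \eqref{friezerule}; this is a short multilinear-algebra computation that I would state and reference rather than grind through.

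Next I would check well-definedness and $\SL_3(\C)$-invariance: all the entries are polynomial in the $V_i$ and manifestly unchanged under the diagonal $\SL_3(\C)$-action (determinants of $3\times 3$ matrices of column vectors are $\SL_3$-invariant), so the construction factors through $\cC_{m-4}$. Then I would build the inverse. Given a closed $2$-frieze of width $m$, Proposition \ref{perio} gives $2n$-periodicity with $n=m+4$; two consecutive columns determine the whole frieze (Proposition \ref{formal}), and from the frieze entries one reconstructs, up to the $\SL_3(\C)$-action, a periodic sequence of vectors $V_i\in\C^3$ — essentially one uses a recursion $V_{i+1} = a_i V_i + b_i V_{i-1} + c_i V_{i-2}$ whose coefficients are read off from frieze entries (the edge rows of $1$'s guaranteeing the determinant normalization), solves it with generic initial data fixed by a choice of frame, and verifies the solution is $n$-periodic precisely because the frieze is closed. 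Checking that these two maps are mutually inverse is then a bookkeeping matching of minors with frieze positions.

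The main obstacle I anticipate is the periodicity/closedness matching: one must show that the reconstructed vector sequence closes up after $n$ steps \emph{if and only if} the frieze is closed (bounded by rows of $1$'s), and that the monodromy of the linear recursion is trivial in $\PSL$ or $\SL$ exactly under the boundary conditions $v_{i-1,i}=v_{i+\frac m2,i-\frac m2}=1$ — this is where the hypothesis that $n$ is not a multiple of $3$ enters, ruling out a nontrivial cube-root-of-unity scalar monodromy. A secondary subtlety is pinning down the exact index conventions so that the half-integer and integer sublattices of the frieze correspond correctly to the $2$-minors and $3$-minors; I would fix these once and for all at the start. Modulo these, the statement follows, and since the detailed construction is carried out in \cite{MOT}, I would present the argument as a guided recollection of that correspondence with the Plücker relation as the computational heart.
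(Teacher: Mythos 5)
Your plan follows essentially the same route as the paper, which itself only sketches this result and defers the details to \cite{MOT}: the frieze entries are the $3\times3$ determinants of Proposition \ref{Entry} (note that the half-integer entries are also $3\times3$ determinants of triples of vertices, not $2\times2$ minors), the local rule is the Pl\"ucker/Dodgson relation, the $\SL_3$-invariance is immediate, and the inverse map reconstructs the polygon from three consecutive diagonals via the linear recurrence \eqref{Recur}, with closure of the polygon matching closure of the frieze (and the mod-$3$ caveat you correctly anticipate). One small correction: the statement's $\cC_{m-4}$ is evidently a typo for $\cC_{m+4}$ (the paper consistently sets $m=n-4$ elsewhere), so your opening normalization $n=m-4$ should read $n=m+4$, as you in fact use later when invoking Proposition \ref{perio}.
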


Starting from a closed 2-frieze one can construct the $n$-gon $(V_i)$ from any three consecutive diagonals, 
for instance:
$$
V_1=\left(
\begin{array}{l}
v_{1,j-2}\\[4pt]
v_{1,j-1}\\[4pt]
v_{1,j}
\end{array}
\right),
\quad
V_2=
\left(
\begin{array}{l}
v_{2,j-2}\\[4pt]
v_{2,j-1}\\[4pt]
v_{2,j}\end{array}
\right),
\quad
\ldots, 
\quad
V_{n}=\left(
\begin{array}{l}
v_{n,j-2}\\[4pt]
v_{n,j-1}\\[4pt]
v_{n,j}
\end{array}
\right).
$$
One then can show (cf. \cite{MOT}) that this construction provides the isomorphism
from Proposition~\ref{Isop}.
This also shows the 2-friezes as $\SL_3$-tilings, see \cite{BeRe}.

\begin{ex}
Considering the 2-frieze \eqref{SixFive} (completed with two rows of 0s above and below), 
we obtain the following hexagon
$$
\left(
\begin{array}{c}
1\\
0\\
0
\end{array}
\right), \quad
\left(
\begin{array}{c}
1\\
1\\
0
\end{array}
\right), \quad
\left(
\begin{array}{c}
2\\
6\\
1
\end{array}
\right), \quad
\left(
\begin{array}{c}
1\\
4\\
1
\end{array}
\right), \quad
\left(
\begin{array}{c}
0\\
1\\
1
\end{array}
\right), \quad
\left(
\begin{array}{c}
0\\
0\\
1
\end{array}
\right).
$$
\end{ex}

Thanks to the above geometric interpretation of 2-friezes, we have a useful geometric
formulas for the entries of a 2-frieze in terms of the corresponding $n$-gon.

\begin{prop}\cite{MOT}\label{Entry}
Given a closed $2$-frieze $(v_{i,j})$ of width $m=n-4$ and its corresponding $n$-gon
$(V_i)$, the entries of the frieze are given by 
\begin{equation*}
v_{i-\half,j-\half}=
\det(
V_{i-1}, V_i, V_{j-3}
),
\qquad
v_{i,j}=\det(
V_{j-3}, V_{j-2}, V_i
).
\end{equation*}
\end{prop}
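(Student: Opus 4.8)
the entries of a closed $2$-frieze of width $m=n-4$ are recovered from the associated $n$-gon $(V_i)$ by
$$
v_{i-\half,j-\half}=\det(V_{i-1},V_i,V_{j-3}),\qquad v_{i,j}=\det(V_{j-3},V_{j-2},V_i).
$$

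\begin{proof}[Proof proposal]
The plan is to verify the two determinantal formulas by checking that their right-hand sides satisfy the defining frieze rule \eqref{friezerule} together with the boundary conditions, and then invoke the uniqueness of the closed $2$-frieze with prescribed boundary data (Proposition \ref{formal}, or rather its consequence that a closed $2$-frieze is determined by finitely many entries). First I would set up notation: fix the index $j$ and recall from the construction preceding Proposition \ref{Isop} that $V_i=(v_{i,j-2},v_{i,j-1},v_{i,j})^t$, so that the proposed formulas are really statements about $3\times 3$ minors built from three consecutive ``diagonals'' of the frieze. The key algebraic input will be a Pl\"ucker-type / $3\times 3$ determinant identity: for vectors $A,B,C,D$ in $\C^3$ one has the Grassmann--Pl\"ucker relation
$$
\det(A,B,C)\,\det(A,B,D) \ \text{-- type quadratic relations},
$$
more precisely the identity expressing $\det(V_{i-1},V_i,V_{j-3})$ and its neighbors through the $2\times2$ frieze rule. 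Concretely, one shows that if $w_{i-\half,j-\half}:=\det(V_{i-1},V_i,V_{j-3})$ and $w_{i,j}:=\det(V_{j-3},V_{j-2},V_i)$, then these satisfy
$$
w_{i-1,j}\,w_{i,j+1}-w_{i,j}\,w_{i-1,j+1}=w_{i-\half,j+\half},
$$
which is a consequence of the normalization $\det(V_k,V_{k+1},V_{k+2})=1$ combined with a standard determinant expansion (a $4\times4$ Desnanot--Jacobi / Dodgson-type identity applied to suitable $3\times3$ submatrices whose columns are drawn from $V_{i-1},V_i,V_{j-3},V_{j-2}$).

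The main steps, in order, are: (1) record the Grassmann--Pl\"ucker identity I will use — for five vectors one column is redundant, so any $\det(V_a,V_b,V_c)$ expands along a basis $\{V_p,V_{p+1},V_{p+2}\}$ with coefficients that are again $3\times3$ determinants, and $\det(V_p,V_{p+1},V_{p+2})=1$ kills denominators; (2) use this to express each of the four ``neighbors'' of a would-be entry $w_{i-\half,j+\half}$ as determinants sharing two common columns, then apply the Desnanot--Jacobi identity to the $3\times 3$ matrix with columns among $V_{i-1},V_i$ and rows/extra data from $V_{j-3},V_{j-2}$, yielding exactly the bilinear frieze relation; (3) check the boundary conditions: the top row of $1$'s corresponds to $\det(V_{j-3},V_{j-2},V_i)=1$ when $i\in\{j-3,j-2,j-1\}$-adjacent, i.e. when the three columns are cyclically consecutive, which is precisely the normalization \eqref{Normal}; similarly $v_{i-1,i}=\det(V_{i-1},V_i,V_{i-2})=\pm\det(V_{i-2},V_{i-1},V_i)=1$, and the lower boundary $v_{i+\frac m2,i-\frac m2}=1$ follows from the same normalization after reindexing using $n$-periodicity $V_{i+n}=V_i$; (4) conclude that the array $(w_{i,j})$ is a closed $2$-frieze of width $m$, and that it agrees with $(v_{i,j})$ because both restrict to the same two consecutive diagonals $V_1,V_2$ (by construction $w_{i,j-2},w_{i,j-1},w_{i,j}$ reproduce the coordinates of $V_i$), so by the uniqueness in Proposition \ref{formal} they coincide everywhere.

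I expect the \textbf{main obstacle} to be step (2): correctly identifying which $3\times3$ (or $4\times 4$) matrix to apply the Desnanot--Jacobi identity to, and tracking the signs coming from column permutations $V_{j-3}\leftrightarrow V_{j-2}$ and from the cyclic ordering, so that the quadratic identity comes out with the exact signs demanded by \eqref{friezerule} rather than up to sign. A secondary subtlety is making sure the argument is uniform in the position relative to $j$ (the two families $v_{i,j}$ and $v_{i-\half,j-\half}$ must interlock correctly, i.e. the ``half-integer'' entries really are the determinants with index shift by one), and that the periodicity used in the boundary check is legitimate — this is fine since Proposition \ref{perio} gives $2n$-periodicity and the $n$-gon is genuinely $n$-periodic by definition. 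Everything else is a routine, if slightly lengthy, determinant computation that I would not grind through here.
\end{proof}
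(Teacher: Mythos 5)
The paper offers no proof of Proposition \ref{Entry} at all --- it is quoted from \cite{MOT} --- so there is no in-house argument to compare yours against; I can only judge the proposal on its own terms, and on those terms it is correct in outline and is essentially the natural proof. The one ingredient you leave implicit, and which is the entire content of your step (2), is the precise quadratic identity; it is worth writing down because once stated the sign-tracking you worry about evaporates. It is the three-term Pl\"ucker relation for $3\times3$ determinants sharing one common column: for $a,b,c,x,y\in\C^3$,
\begin{equation*}
\det(a,b,x)\det(a,c,y)-\det(a,b,y)\det(a,c,x)=\det(a,b,c)\det(a,x,y),
\end{equation*}
which holds because $(u,v)\mapsto\det(a,u,v)$ descends to an alternating bilinear form on the two-dimensional space $\C^3/\langle a\rangle$, where every such form is proportional to the $2\times2$ determinant. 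Taking $a=V_{j-2}$, $b=V_{j-3}$, $c=V_{j-1}$, $x=V_{i-1}$, $y=V_i$ and using the normalization $\det(V_{j-3},V_{j-2},V_{j-1})=1$ gives exactly $v_{i-1,j}v_{i,j+1}-v_{i,j}v_{i-1,j+1}=v_{i-\half,j+\half}$ with the signs of \eqref{friezerule}; taking $a=V_p$, $b=V_{p-1}$, $c=V_{p+1}$, $x=V_{q-2}$, $y=V_{q-1}$ gives the half-integer instance of the rule. Your boundary checks are fine (a cyclic permutation of three columns is even, so the sign is $+1$, and the lower boundary uses $V_{i+n}=V_i$ with $n=m+4$). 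Two small points to tighten in step (4): the assertion that $w_{i,j-2},w_{i,j-1},w_{i,j}$ reproduce the coordinates of $V_i$ only makes sense for the frieze completed by the two rows of $0$'s (as the paper does before Proposition \ref{Isop}), since $v_{i,j-2}$ and $v_{i,j-1}$ lie outside the strip $-1\le i-j\le m$; with that convention $V_{j-3}=(1,0,0)^t$ and $V_{j-2}=(*,1,0)^t$ in the chosen frame, so $\det(V_{j-3},V_{j-2},V_i)=v_{i,j}$ on the defining diagonal and the frieze rule then yields $\det(V_{j-3},V_{i-1},V_i)=v_{i-\half,j-\half}$ on the adjacent half-integer diagonal. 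Finally, the uniqueness you invoke should be phrased for two consecutive \emph{diagonals} rather than columns as in Proposition \ref{formal}, but the same propagation argument (solving \eqref{friezerule} for the next diagonal, with no division by zero since the entries are positive) applies verbatim.
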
 
Note that in the sequel, we simplify the notation by using $|\cdot,\cdot,\cdot|=\det(\cdot,\cdot,\cdot)$.

Finally, we will also need the following linear recurrence relation.

\begin{prop}\cite{MOT}
The points of the $n$-gon $(V_i)$ satisfy  the following linear recurrence relation:
\begin{equation}
\label{Recur}
V_i=v_{i,i}\,V_{i-1}-v_{i-\half,i-\half}\,V_{i-2}+V_{i-3}.
\end{equation}
\end{prop}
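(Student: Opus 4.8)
The statement to prove is the linear recurrence $V_i=v_{i,i}\,V_{i-1}-v_{i-\half,i-\half}\,V_{i-2}+V_{i-3}$. My plan is to verify the claimed identity directly in $\C^3$, using the characterization of the $V_i$ in terms of determinants of consecutive triples (Proposition \ref{Entry}) together with the normalization condition \eqref{Normal}. The key observation is that $V_{i-3},V_{i-2},V_{i-1}$ are three vectors in $\C^3$ with $\det(V_{i-3},V_{i-2},V_{i-1})=1$ by \eqref{Normal}, hence they form a basis; so it suffices to show that $V_i$ has the asserted coordinates when expanded in that basis. By Cramer's rule, the coefficient of $V_{i-1}$ in $V_i$ is $\det(V_{i-3},V_{i-2},V_i)$, the coefficient of $V_{i-2}$ is $\det(V_{i-3},V_i,V_{i-1}) = -\det(V_{i-3},V_{i-1},V_i)$, and the coefficient of $V_{i-3}$ is $\det(V_i,V_{i-2},V_{i-1})=\det(V_{i-3},V_{i-2},V_{i-1})\cdot(\text{stuff})$; more precisely, since $\det(V_{i-3},V_{i-2},V_{i-1})=1$, the coefficient of $V_{i-3}$ is exactly $\det(V_i,V_{i-2},V_{i-1})$.

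**Identifying the coefficients with frieze entries.** So I would write
$$
V_i=\det(V_{i-3},V_{i-2},V_i)\,V_{i-1}-\det(V_{i-3},V_{i-1},V_i)\,V_{i-2}+\det(V_i,V_{i-2},V_{i-1})\,V_{i-3},
$$
and then match each coefficient against Proposition \ref{Entry}. The first coefficient $\det(V_{i-3},V_{i-2},V_i)$ is precisely $v_{i,i}$ by the second formula in Proposition \ref{Entry} (take $j=i$). The second coefficient involves $\det(V_{i-3},V_{i-1},V_i)$; rewriting it as $\det(V_{i-1},V_i,V_{i-3})$ up to the cyclic permutation of columns (which preserves the determinant), this equals $v_{i-\half,i-\half}$ by the first formula in Proposition \ref{Entry} (take $j=i$, so $j-3=i-3$ and $V_{i-1},V_i$ are the first two arguments). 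The third coefficient $\det(V_i,V_{i-2},V_{i-1})$: cyclically permute to $\det(V_{i-2},V_{i-1},V_i)$, which is $1$ by the normalization \eqref{Normal}. Substituting these three identifications yields exactly \eqref{Recur}.

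**Anticipated obstacle.** The computation itself is routine linear algebra; the only real care needed is bookkeeping the indices and signs in Proposition \ref{Entry}, since that proposition uses half-integer indices and a specific convention $v_{i-\half,j-\half}=\det(V_{i-1},V_i,V_{j-3})$, $v_{i,j}=\det(V_{j-3},V_{j-2},V_i)$. I would need to double-check that substituting $j=i$ really produces $v_{i-\half,i-\half}$ and $v_{i,i}$ with the correct cyclic ordering of the determinant arguments, and that the sign in the middle term of the Cramer expansion lines up with the minus sign in \eqref{Recur}. The cleanest way to avoid sign errors is to fix the convention $\det(e_1,e_2,e_3)=1$, expand $V_i=\alpha V_{i-1}+\beta V_{i-2}+\gamma V_{i-3}$, and solve for $\alpha,\beta,\gamma$ by taking determinants of $V_i$ against pairs from $\{V_{i-3},V_{i-2},V_{i-1}\}$, using $\det(V_{i-3},V_{i-2},V_{i-1})=1$ throughout — this is where the hypothesis \eqref{Normal} enters and makes all denominators trivial.
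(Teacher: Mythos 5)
Your argument is correct: since $\det(V_{i-3},V_{i-2},V_{i-1})=1$ the triple is a basis, and expanding $V_i$ in it and isolating each coefficient by a determinant against the other two vectors gives exactly $v_{i,i}$, $-v_{i-\half,i-\half}$ and $1$ after matching with Proposition \ref{Entry} (at $j=i$) and the normalization \eqref{Normal}. The paper itself states this proposition as a citation from \cite{MOT} without proof, and your Cramer-type expansion is the standard and complete way to establish it.
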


\subsection{Connecting two 2-friezes}
We give examples of the operations of gluing of friezes described in Theorem \ref{GluThm}.
Note that this gluing preserves more than the two columns of the initial friezes. 
Triangular fragments of the old friezes appear in the new frieze (in the array \eqref{frag} the white bullets, resp. black bullets, stand for the initial entries in the top frieze, resp.bottom frieze, that still appear in the new frieze). 
The gluing can be described in terms of gluing of diagonals, 
starting from or ending at the pair 1 1,
instead of gluing of columns, at the top and bottom of the pair 1 1. 

\begin{equation}\label{frag}
\begin{array}{rccccccccccl}
\cdots&1&1&1&1 & 1 & 1 &1&1&1&1&\cdots\\
&& \circ & \circ  & \circ &\circ&\circ& \circ & \circ& \circ& & \\
&&& \circ & \circ  &\circ  & \circ &  \circ & \circ &&&\\
&&&&  \circ & \circ & \circ & \circ & &&\\
&&& && 1 & 1 &&&&&\\
 &&&&\bullet&\bullet &\bullet&\bullet&&&&\\
&&&\bullet &\bullet& \bullet & \bullet&\bullet &\bullet&&&\\
\cdots&1&1&1&1 & 1 & 1 &1&1&1& 1&\cdots
\end{array}
\end{equation}

\begin{ex}
(a)
The friezes \eqref{SixThree}-\eqref{SixFive} are all obtained as a gluing of the trivial frieze

$$
\begin{array}{cccccccccccccccc}
\cdots &1 & 1 & 1 & 1& 1& 1& 1& \cdots\\[6pt]
\cdots &1& 1& 1& 1& 1& 1& 1& \cdots \\
\end{array}
$$
 and the unique frieze of width 1
$$
\begin{array}{cccccccccccccccc}
\cdots &1 & 1 & 1 & 1& 1& 1& 1& \cdots\\[6pt]
\cdots & 1 & 1&   2&  3& 2&1& 1&\cdots \\[6pt]
\cdots &1& 1& 1& 1& 1& 1& 1& \cdots \\
\end{array}
$$

(b)
The 2-frieze in Figure \ref{ex2frieze} in Introduction, is obtained as the gluing of the frieze
of width 2 given in \eqref{SixOne} and the above unique frieze of width 1. 
This can be viewed for instance as follows, as the gluing of two columns, or equivalently two diagonals 
$$
\begin{array}{cc}
1&1\\
2&2\\
2&2\\
1&1
\end{array} \,+\,
\begin{array}{cc}
1&1\\
2&3\\
1&1
\end{array}\quad
\text{ or }
\quad
\begin{array}{ccccc}
1&1&&&\\
&2&2&&\\
&&2&2&\\
&&&1&1
\end{array} +
\begin{array}{cccc}
1&1&&\\
&3&2&\\
&&1&1
\end{array} 
\quad
\text{or}
\quad
\begin{array}{ccccc}
&&&1&1\\
&&2&2&\\
&2&2&\\
1&1&&&
\end{array} +
\begin{array}{cccc}
&&1&1\\
&1&2&\\
1&1&&
\end{array} .
$$

\end{ex}

\subsection{Cutting and gluing friezes}
We give below an example of Theorem \ref{Gluxy}.

\begin{ex}
The pair $(2,1)$ appears in both friezes \eqref{SixFive} and \eqref{SixTwo} of width 2.
The operation of Theorem \ref{Gluxy} gives the following new frieze
$$
\begin{array}{cccccccccccccccc}
1&1&1&1&1&1&1&1&1&1&1&1&1&1\\
4&3&1&3&13&5&\mathbf{1}&\mathbf{3}&\mathbf{5}&\mathbf{2}&2&6&4&2\\
2&1&8&10&2&8&14&\mathbf{2}&\mathbf{1}&8&10&2&8&14\\
3&5&2&2&6&4&\mathbf{2}&\mathbf{4}&\mathbf{3}&\mathbf{1}&3&13&5&1\\
1&1&1&1&1&1&1&1&1&1&1&1&1&1
\end{array}
$$
\end{ex}

\subsection{Proof  of Theorem \ref{GluThm}}
The initial two friezes are associated to the polygons $U=(U_1,\ldots , U_n)$,  $n=m+4$, and 
$V=(V_1, \ldots, V_k)$, $k=\ell+4$. 
The entries $(u_{i,j})$ in the first frieze and the entries $(v_{i,j})$ in the second frieze are given by
$$
\begin{array}{rcl}
u_{i,j}&=&|
U_{j-3}, U_{j-2}, U_i |\\[6pt]
u_{i+\half,j+\half}&=&
|U_{j-2}, U_{i}, U_{i+1}|
\end{array},
\qquad
\begin{array}{rcl}
v_{i,j}&=&|
V_{j-3}, V_{j-2}, V_i |\\[6pt]
v_{i+\half,j+\half}&=&
|V_{j-2}, V_{i}, V_{i+1}|
\end{array}
$$
We assume that the pair $1\; 1$ where the friezes are connected corresponds to
the entries $u_{4,n},\; u_{4+\half, n+\half}$ of the first frieze and to $v_{4,4}, \; v_{4+\half, 4+\half}$
in the second.
Define 
$$
U'_1:=U_1+|V_1,V_3,V_4| U_2
$$
Since the polygons are defined up to the action of $\SL(3,\C)$, one can assume that the following two sequences of consecutive vertices are the same:
$$V_{1}=U'_1,\;V_{2}=U_{2},\;V_3=U_n.$$
One considers the $n+k-3$-gon $(W_{i})$ consisting in 
$$
W_1=U'_1, \;W_2=U_2,  \;\ldots, W_{n-1}=U_{n-1},\; W_n=V_3, \;W_{n+1}=V_4,\ldots, W_{n+k-3}=V_{k},$$
see Figure \ref{glu11}.
\begin{figure}[hbtp]
\includegraphics[width=6.5cm]{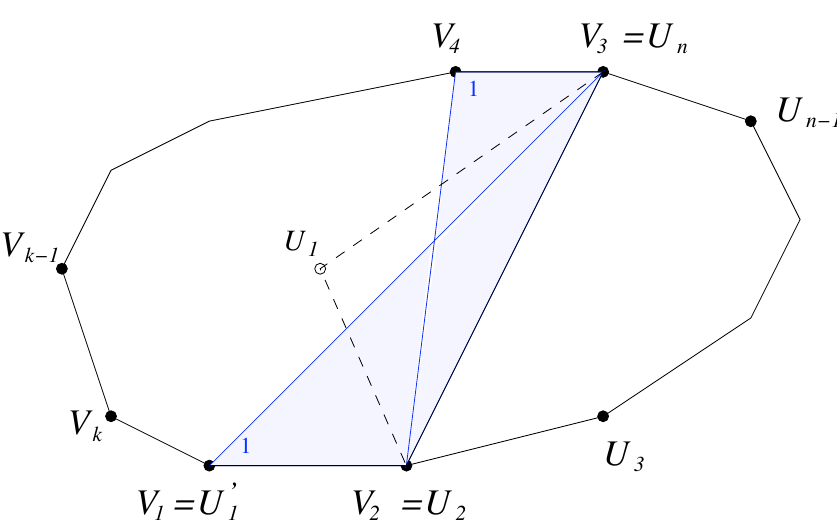}
\caption{Geometric situation when gluing two columns of friezes over 11}
\label{glu11}
\end{figure}

The change from $U_1$ to $U'_1$ has been made  in order to  have
$$1=|U_{n-1},V_3,V_4 |=|W_{n-1},W_n,W_{n+1}|.$$
Thus, any three consecutive vertices of the polygon $(W_i)$ form a matrix of determinant 1.
The 2-frieze associated to $(W_{i})$ is exactly
the glued frieze obtained in Theorem \ref{GluThm} (the pair $1 1$ corresponds now to 
the entries $w_{4,n}=|W_1,W_2,W_n|$ and $w_{4+\half,n+\half}=|W_2,W_n, W_{n+1}|$).

It remains to show that the entries are actually positive integers.

Since we already have two consecutive columns with positive entries
the positivity of the entire frieze $(w_{i,j})$ is guarantee by the local rule:
$$
 \begin{array}{ccccccc}
 *&B&*\\
 A&E&D\\
 *&C&*
\end{array}\quad
\Longrightarrow
\quad
D=(E+BC)/A.
$$
The vertices in $(V_i)$ and $(U_i)$ satisfy recurrence relations of the form
$$
V_{i}=a_iV_{i-1}-b_iV_{i-2}+V_{i-3},\;\; U_{i}=c_iU_{i-1}-d_iU_{i-2}+U_{i-3},
$$ 
where $a_i,b_i,c_i,d_i$ are integers.
Thus, by induction each vertex $V_i$  is a linear combination with integer coefficients of the first three points $V_1,V_2,V_3$. And similarly,  each vertex $U_i$  is a linear combination with integer coefficients of the three points $U_n,U_1,U_2$ and therefore of the three points  $U_n,U'_1,U_2$, which are the same as $V_3,V_1,V_2$.
One deduces that each vertex of $(W_i)$ is a linear combination of 
$V_1,V_2,V_3$ with integer coefficients. 
It follows, using the determinantal formulas of Proposition \ref{Entry}, that the entries in the frieze associated to  $(W_i)$ are all integers.

\subsection{Proof  of Theorem \ref{CutThm}}
The initial 2-frieze pattern \eqref{BigPat}
corresponds to some $n$-gon $V=(V_1,\ldots,V_n)$ in $\R^3$.
The entries of the 2-frieze are given by:
$$
v_{i-\half,j-\half}=
\left|
V_{i-1}, V_i, V_{j-3}
\right|,
\qquad
v_{i,j}=\left|
V_{j-3}, V_{j-2}, V_i
\right|,
$$
for integer $i,j\leq n-4$.
We fix $i,j$ such that $v_{i-\half,j-\half}=x$, $v_{i,j}=y$, $v_{i-\half,j-1}=u$, $v_{i+\half,j-\half}=v$.

Let us show that the condition \eqref{Cond} is necessary and sufficient
for the existence of a point $W$, such that the polygon
\begin{equation}\label{gonW}
V_1,V_2,\ldots,V_i,W,V_{j-3},V_{j-2},\ldots,V_n
\end{equation}
defines a positive integer 2-frieze pattern. 
As before, the positivity of the 2-frieze is guarantee by the positvity of two consecutive columns.
\begin{figure}[hbtp]
\includegraphics[width=4.5cm]{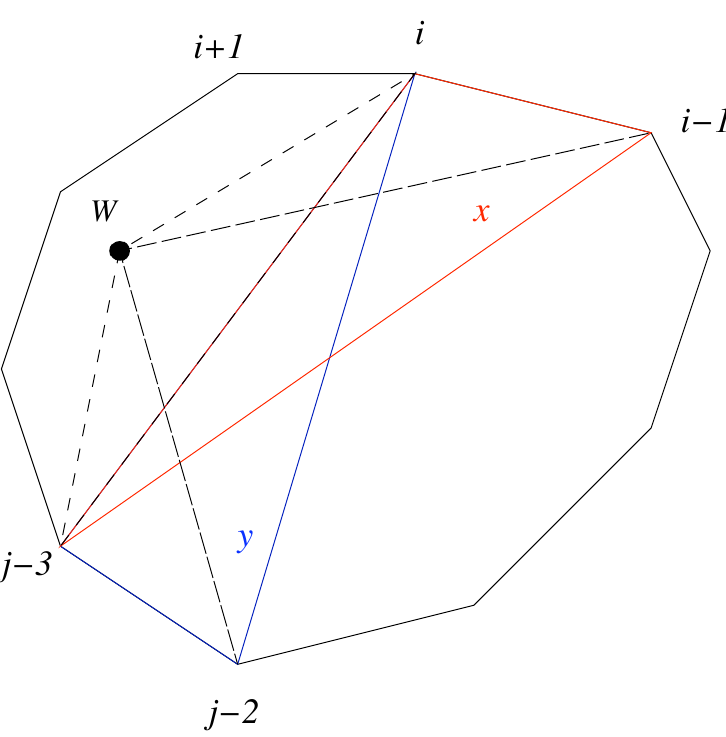}
\caption{Cutting a 2-frieze above $x,y$.}
\label{FirstFig}
\end{figure}

We can express the point $W$ as a linear combination of three
consecutive points:
$$
W=aV_i-bV_{i-1}+cV_{i-2}.
$$
The sequence \eqref{gonW} defines a closed 2-frieze if and only if $a,b,c$ are positive integers and 
$$
\left|
V_{i-1}, V_i, W
\right|=
\left|
V_i, W, V_{j-3}
\right|=
\left|
W, V_{j-3}, V_{j-2}
\right|=1.
$$
The first condition gives immediately
$c=1$. 
The second determinant can be written using the recurrence relation
$V_{i+1}=a_{i+1}V_i-b_{i+1}V_{i-1}+V_{i-2}$,
$$
\left|
V_i, W, V_{j-3}
\right|=|V_i\,,\,-bV_{i-1}+V_{i-2}\,,\,V_{j-3}|=bx+|V_i, b_{i+1}V_{i-1}+V_{i+1},V_{j-3}|=bx-b_{i+1}x+v.
$$
Hence, the condition $
\left|
V_i, W, V_{j-3}
\right|=1$ leads to
$$
b=b_{i+1}+\frac{1-v}{x}.
$$
So,  $v\equiv 1 \mod x$ is a necessary and sufficient condition. 
By symmetry we deduce similarly  $u\equiv 1 \mod y$.

\subsection{Proof of Theorem \ref{Gluxy}}
The condition \eqref{Cond2} is necessary because of Theorem \ref{CutThm}.
Let us prove that \eqref{Cond2} is also sufficient.
The two initial friezes are associated to polygons, the $n=m+4$-gon $(U_i)$ 
and the $k=\ell+4$-gon $(V_i)$. 
Assume that the entries appearing in the two friezes are given by
\begin{equation}\label{rsxyuv}
\begin{array}{rcl}
r&=&
|U_{2}, U_{n-2}, U_{n-1} |\\[6pt]
s&=&
|U_{2}, U_{3}, U_{n-1}|\\[6pt]
x&=&|
U_{1}, U_{2}, U_{n-1} |\\[6pt]
y&=&
|U_{2}, U_{n-1}, U_{n}|
\end{array},
\qquad
\begin{array}{rcl}
x&=&
|V_1,V_2,V_4 |\\[6pt]
y&=&
|V_{2},V_4,V_5|\\[6pt]
u&=&|
V_1,V_4,V_5 |\\[6pt]
v&=&
|V_1,V_2,V_5|
\end{array}
\end{equation}
Define 
$$
U'_1:=U_1+\a U_2
$$
where $\a$ is given by $u=1+\a y$ from \eqref{Cond2}.

\begin{figure}[hbtp]
\includegraphics[width=6.5cm]{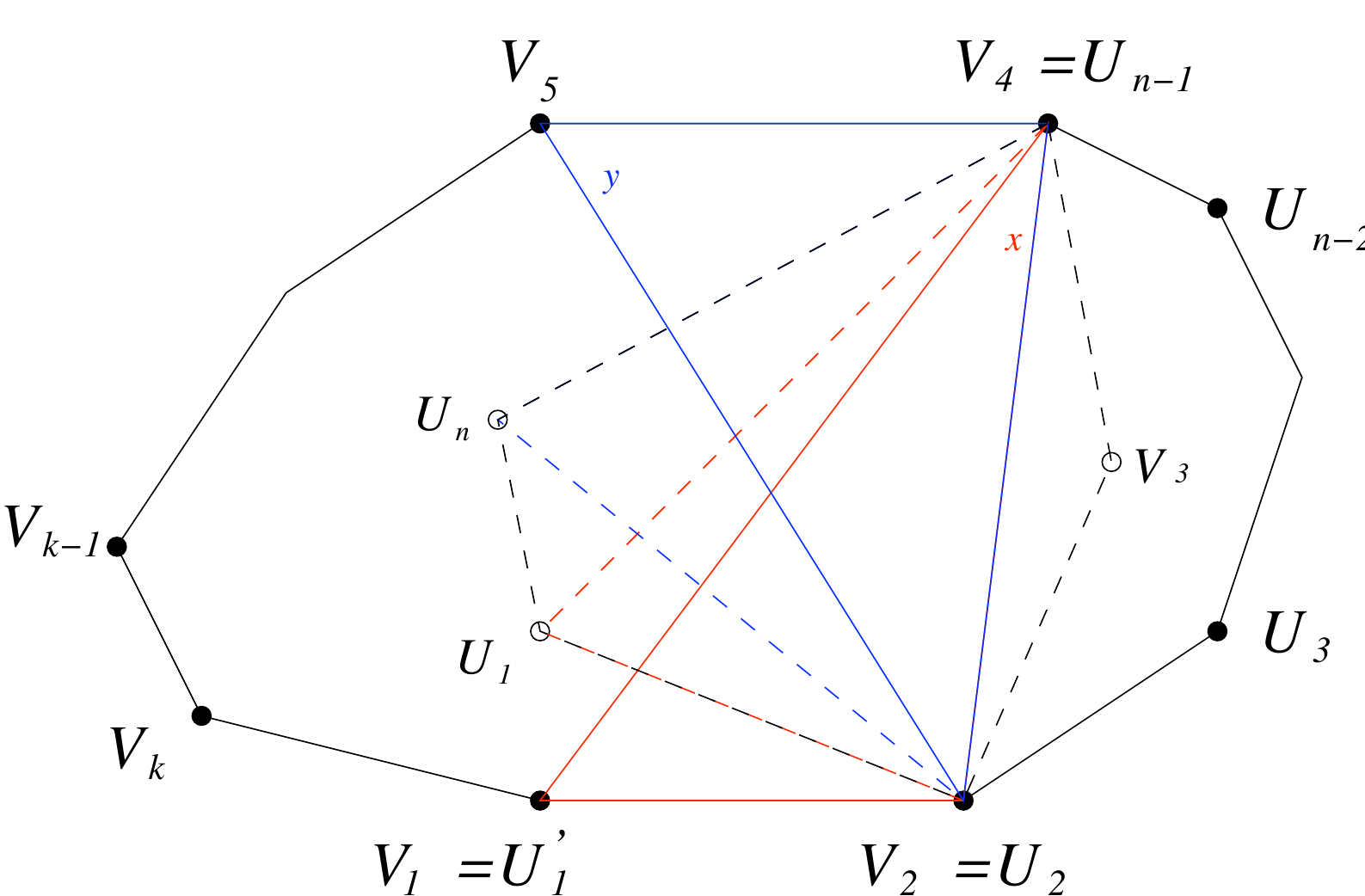}
\caption{Gluing friezes/polygons over x,y}
\label{Figxy}
\end{figure}

We are allowed to identify the following vertices
$$
V_1=U'_1,\;V_2=U_2, V_4=U_{n-1}.
$$
Now we consider the $n+k-5$-gon $(W_i)$ consisting in
$$
W_1=U'_1, W_2=U_2, \ldots, W_{n-2}=U_{n-2}, W_{n-1}=V_4, \ldots, W_{n+k-5}=V_k,
$$
see Figure \ref{Figxy}.
The choice of $U'_1$  has been made to guarantee 
$$
1=|U_{n-2},U_{n-1}, V_5|=|W_{n-2}, W_{n-1}, W_{n}|
$$
Indeed,
using the formula \eqref{rsxyuv} one can express
\begin{eqnarray*}
V_5&=&\frac{1}{x}\,(vV_4-uV_2+yV_1)\\[6pt]
&=&\frac{1}{x}\,(vU_{n-1}+(\a y -u)U_2+yU_1)
\end{eqnarray*}
and then compute
\begin{eqnarray*}
|U_{n-2},U_{n-1}, V_5|&=&\frac{\a y -u}{x}\,|U_{n-2},U_{n-1},U_2|+\frac{y}{x}\,|U_{n-2},U_{n-1},U_1|\\[6pt]
&=& \frac{(\a y -u)r}{x}+\frac{y}{x}\frac{x+r}{y}\\[4pt]
&=&1,
\end{eqnarray*}
where we used the fact that $|U_{n-2},U_{n-1},U_1|$ is equal to $(x+r)/y$, since it is the entry at the left of $x$ in the first frieze.

It follows that  any three consecutive vertices of the polygon $(W_i)$ form a matrix of determinant~1.
The 2-frieze associated to $(W_{i})$ is exactly
the glued frieze obtained in Theorem \ref{Gluxy}.

\section{A classification problem and other open questions}\label{Conc}

We have described several natural procedures to construct integral closed 2-friezes.
The first one is based on evaluation of cluster variables in the cluster algebra of type $A_2\times A_m$.
The friezes obtained this way are called unitary. 

We have proved that for $m\geq5$, there exist infinitely many integral closed 2-friezes
with $m$ non-trivial rows. 
This is due to the fact that in this case the cluster algebra is of infinite type and therefore one can
construct infinitely many unitary friezes.
We also proved that \textit{not every integral $2$-frieze is unitary}. 
Proposition \ref{unitfrieze} and Remark \ref{width3} provide examples of non-unitary friezes.
The problem of classification of integral closed 2-friezes (see Question \ref{classif})
can now be reformulated:

\begin{ques}
How many non-unitary integral $2$-friezes are there for a given width?
\end{ques}

It is also natural to ask the following.
Are the integral 2-friezes in bijective correspondence with a set of combinatorial objects similar to triangulations?
This would give an analog of the property (CC2) mentioned in the introduction.

The second type of constructions introduced in the paper 
comes from the geometric interpretation of 2-friezes in terms of polygons in the space.
It would be interesting to determine whether the set of unitary friezes 
is stable under the operations \eqref{glucol}, \eqref{BigPat}, \eqref{glufrixy}.

In a more algebraic setting, let us denote by $\F_m$ the set of integral closed 2-friezes of width $m$, 
and let us consider the vector space of basis vectors $\lbrace \F_m, m\geq 1\rbrace $ over an arbitrary field $K$
$$
\Ab=\bigoplus_{m\geq 1}K \F_m. 
$$ 
The operation \eqref{glucol} gives a structure of associative algebra on $\Ab$. 
This algebra is graded by $\deg(F)=m-1$ for $F\in\F_m$.
It could be interesting to study the algebra $\Ab$, for instance to determine generators and relations.

\bigskip

\noindent \textbf{Acknowledgements}.
Many ideas and results in the present paper were discussed with Valentin Ovsienko.
It is a pleasure to thank the CIRM, that offered us a Recherche en Bin\^ome stay.
It is also a pleasure to thank Brown University for its warm hospitality and the chance it offered me to 
discuss friezes and related topics
with V.Ovsienko, R.Schwartz and S.Tabachnikov.
My special gratitude goes to V. Fock who gave the first idea for the proof of Theorem~\ref{inffrieze},
and to the anonymous referee for the various valuable comments.


\end{document}